\documentclass[twoside]{amsart}
\usepackage{amsmath,amsfonts,amsthm,mathrsfs,amscd}
\usepackage{hyperref}
\usepackage[numeric,initials]{amsrefs}
\usepackage{enumitem}
\usepackage{dcpic,pictexwd}
\usepackage{amsmath,amsfonts,amsthm,mathrsfs,amscd,stmaryrd}
\usepackage[numeric,initials]{amsrefs}



\newcommand{\abs}[1]{\lvert #1 \rvert}

\newcommand{\eps}{\varepsilon}

\newcommand{\fust}[1][f]{{#1}^{\ast}}

\newcommand{\finv}[1][f]{{#1}^{-1}}

\newcommand{\ccinterval}[2]{[#1,#2]}

\newcommand{\unit}{\ccinterval{0}{1}}

\newcommand{\tensor}{\otimes}



\newcommand{\NN}{\mathbb{N}}
\newcommand{\ZZ}{\mathbb{Z}}
\newcommand{\QQ}{\mathbb{Q}}
\newcommand{\RR}{\mathbb{R}}
\newcommand{\CC}{\mathbb{C}}
\newcommand{\HH}{\mathbb{H}}


\newcommand{\menge}[2]{\bigl\{ \thinspace #1 \thinspace\thinspace \big\vert%
\thinspace\thinspace #2 \thinspace \bigr\}}


\newtheorem{thm}{Theorem}
\newtheorem{lem}[thm]{Lemma}
\newtheorem{prop}[thm]{Proposition}
\newtheorem*{lem*}{Lemma}

\newtheorem*{thm*}{Theorem}

\theoremstyle{definition}

\theoremstyle{remark}

\theoremstyle{plain}


\DeclareMathOperator{\id}{id}

\renewcommand{\Im}{\operatorname{Im}}
\renewcommand{\Re}{\operatorname{Re}}





\newcommand{\shf}[1]{\mathscr{#1}}


\newcommand{\defeq}{\underset{\textrm{def}}{=}}

\newcommand{\argbl}{\_\!\_\!\_}


\def\overbar#1#2#3{{%
	\setbox0=\hbox{$#1$}%
	\dimen0=\wd0
	\advance\dimen0 by -#2 
	\vbox {\nointerlineskip \moveright #3 \vbox{\hrule height 0.3pt width \dimen0}%
		\nointerlineskip \vskip 1.5pt \box0}%
}}


\newcommand{\dst}{\Delta^{\!\ast}}
\newcommand{\dstn}[1]{(\smash[t]{\dst})^{#1}}
\newcommand{\CCstn}[1]{(\smash[t]{\CC^{\!\ast}})^{#1}}
\newcommand{\Mb}{\overbar{M}{4pt}{3pt}}
\newcommand{\Xb}{\overbar{X}{2.5pt}{2pt}}
\newcommand{\Tb}{\overbar{T}{2.5pt}{2pt}}
\newcommand{\TUb}{\overbar{T_U}{4pt}{2pt}}

\newcommand{\unitint}{\lbrack 0, 1 \rbrack}
\newcommand{\locsysH}{\shf{H}}
\DeclareMathOperator{\Aut}{Aut}

\newcommand{\shV}{\shf{V}}
\newcommand{\shO}{\shf{O}}

\newcommand{\shVb}{\overbar{\shV}{3pt}{3pt}}
\newcommand{\shOM}{\shf{O}_M}

\newcommand{\Chb}{\overline{C(h)}}

\theoremstyle{definition}
\newtheorem*{question}{Question}

\begin{document}

\title{Canonical extensions of local systems}
\author[C.~Schnell]{Christian Schnell}
\address{The Ohio State University\\
231 West 18th Avenue\\
Columbus, OH 43210}
\email{schnell@math.ohio-state.edu}

\subjclass{14C30, 32S40}
\keywords{Canonical extension, Local system, Monodromy, Nilpotent operators}

\begin{abstract}
A local system on a complex manifold $M$ can be viewed in two ways---either as
a locally free sheaf $\locsysH$ on $M$, or as a union of covering spaces
$T(\locsysH) \to M$. When $M$ is an open set in a bigger manifold $\Mb$,
the local system will generally not extend to $\Mb$, because of local monodromy.
This paper proposes an extension of the local system as an \emph{analytic space}
over $\Mb$, in the case when $\Mb \setminus M$ has normal crossing
singularities, and the local system is \emph{unipotent} along $\Mb
\setminus M$.  
The analytic space is obtained by taking the closure of $T(\locsysH)$ inside the
total space of Deligne's canonical extension of the vector bundle $\shOM \tensor
\locsysH$ to $\Mb$. It is not normal, but its normalization is locally toric.
\end{abstract}
\maketitle


\section{Introduction}

In his book about differential equations with regular singular points \cite{Deligne},
Deligne introduced the so-called ``canonical extension'' of a vector bundle with
flat connection. The most important case of his construction is the following. Say
$\shV$ is a holomorphic vector bundle on a complex manifold $M$, equipped with a flat
connection $\nabla$ (which means that $\nabla \circ \nabla = 0$). Now suppose that
$M$ is an open subset in a bigger complex manifold $\Mb$, in such a way that
\begin{enumerate}
\item the complement $\Mb \setminus M$ is a divisor with normal crossing singularities, and
\item the local monodromy of $\nabla$ near points of $\Mb \setminus M$ is unipotent.
\end{enumerate}
In this situation, Deligne shows that $\shV$ extends in a canonical manner to a
vector bundle $\shVb$ on $\Mb$, whose characteristic property is that in any local
frame for $\shVb$ near points of $\Mb \setminus M$, the connection matrix for
$\nabla$ has at worst logarithmic poles with nilpotent residues.\footnote{The
construction of $\shVb$, in local coordinates, is reviewed in the appendix.}

Local systems with integer coefficients are one source for flat vector bundles; if
$\locsysH$ is a local system of (finitely generated, free) abelian groups, then $\shV
= \shO_M \tensor_{\ZZ} \locsysH$, together with the natural connection, is such a
bundle. So if $\locsysH$ is unipotent along $\Mb \setminus M$, i.e., has unipotent
monodromy near points of $\Mb \setminus M$, then Deligne's construction applies, and
there is a canonical extension $\shVb$ for the vector bundle. It is then natural to
ask:

\begin{question} 
Does the original local system also extend in some way?
\end{question}

The present paper proposes an answer to this question. It should be clear that,
because of monodromy, $\locsysH$ cannot in general be extended to $\Mb$ as a local
system. We return therefore to the more old-fashioned view of a local system as a
\emph{space}, instead of as a sheaf---the total space $T = T(\locsysH)$ is a covering space
of $M$, typically infinite-sheeted and with countably many components. Now $T$ is
naturally embedded into the total space of the vector bundle $\shV$, and we shall
answer the question posed above by determining its closure $\Tb$ inside the total space of
the canonical extension $\shVb$.

Perhaps surprisingly, this closure is still an analytic space with good properties.
For instance, while $\Tb$ need not be normal, its normalization is locally toric, and
thus has controlled singularities, in a sense. This, and other things, will follow
from the explicit local equations for $\Tb$ that are given in
Section~\ref{sec:localequations}.

As discussed in Section~\ref{sec:universal}, the space $\Tb$ also has a sort of
universal property that might justify calling $\Tb$---or perhaps its
normalization---the ``canonical extension'' of the local system $\locsysH$. It is in
this sense that the title of the paper should be understood.

A precise description of $\Tb$ will be given below, so only two small issues need be
addressed here. One is that points of $\Tb \setminus T$ arise from
monodromy-invariant points in $\locsysH$. This is what one would expect;
indeed, $\Tb$ is also ``canonical'' in the sense that it includes all points that are
invariant under any part of the local monodromy near $\Mb \setminus M$. The other,
perhaps more surprising issue is that, even though $T$ itself is discrete over $M$,
the fibers of $\Tb$ over $\Mb$ need not be discrete.  In fact, they are unions of
affine spaces, of dimension possibly as big as $\dim M - 1$. 

\subsection*{Note}

The construction of $\Tb$ arose in a project that Herb Clemens and myself are working
on. Hoping that the result might be of independent interest, I am presenting it by
itself, without any of the original context. I warmly thank Herb Clemens for many
useful discussions and for his continuing help.


\section{Statement of the result}

This section gives a precise statement of the main result.  Let $M$ be a complex
manifold of dimension $n$, embedded as an open subset into a larger complex manifold
$\Mb$, in such a way that $\Mb \setminus M$ is a divisor with only normal crossing
singularities. Every point in $\Mb$ thus has a neighborhood isomorphic to $\Delta^n$,
with holomorphic coordinates $t_1, \dotsc, t_n$, in which the divisor $\Mb \setminus
M$ is defined by an equation of the form $t_1 \dotsm t_r = 0$.\footnote{Here, and in
the following, $\Delta \subseteq \CC$ denotes the open unit disk in the complex
plane, and $\dst = \Delta \setminus \{0\}$ the punctured unit disk.}

On $M$, we assume that we are given a local system $\locsysH$, with fiber $H \simeq
\ZZ^d$ a finitely generated free $\ZZ$-module. Up to isomorphism, it is determined by
the corresponding monodromy representation
\[
	\rho \colon G \to \Aut_{\ZZ}(H),
\]
where $G$ is the fundamental group of $M$ (for some choice of basepoint).

We shall assume that the local system $\locsysH$ is \emph{unipotent}; that is to say,
in a neighborhood $\Delta^n$ of each point, the fundamental group of $\Delta^n \cap
M$ should act by unipotent transformations on the fiber of $\locsysH$. It is then
possible to extend the holomorphic vector bundle $\shV = \locsysH \tensor \shO_M$ to
a vector bundle $\shVb$ on $\Mb$, using Deligne's construction. 

The total space $T = T(\locsysH)$ of the local system is naturally a subset of the
total space $T(\shVb)$ of this canonical extension. We are going to extend $T$ in the maximal
possible way, by taking its closure inside the total space of the vector
bundle. The resulting space is surprisingly nice, as witnessed by the following
theorem.

\begin{thm} \label{thm:Tb}
Let $\Tb$ be the closure of the total space $T$ of the local system, taken inside the
total space of the canonical extension of $\locsysH \tensor \shO_M$ over $\Mb$. Then
$\Tb$ has the following three properties:
\begin{enumerate}[label=(\roman{*}), ref=(\roman{*})]
\item $\Tb$ is a reduced analytic subset of $T(\shVb)$. \label{it:ASi}
\item The projection map $p \colon \Tb \to \Mb$ is holomorphic, 
	and $\finv[p](M) = T$. \label{it:ASii}
\item The normalization of $\Tb$ is locally toric. \label{it:ASiii}
\end{enumerate}
\end{thm}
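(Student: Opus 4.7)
The natural strategy is to compute $\Tb$ explicitly in Deligne's local coordinates and read off its structure from there. Working in a polydisc $\Delta^n$ around a point of $\Mb \setminus M$ in which the divisor is cut out by $t_1 \cdots t_r = 0$, fix a basis of $H$ and let $N_1, \ldots, N_r$ denote the commuting nilpotent logarithms of the local monodromies $T_j = \exp(N_j)$ around $t_j = 0$; commutativity comes from the fact that the fundamental group of $(\dst)^r$ is abelian. Deligne's canonical frame is related to a basis of multi-valued flat sections by
\[
	A(t) = \exp\biggl(\sum_{j=1}^{r} \ell_j N_j\biggr), \qquad \ell_j = \frac{1}{2\pi i}\log t_j,
\]
which is a polynomial in the $\ell_j$ since the $N_j$ are nilpotent and commute. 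In Deligne-frame coordinates $\xi = (\xi_1, \ldots, \xi_d) \in \CC^d$ on the fibers of $T(\shVb)$, the open part $T \subset T(\shVb)|_M$ is the image of the multi-valued map $(t, a) \mapsto (t, A(t)a)$ for $a \in H$, and the $\ZZ^r$ ambiguity in the branches of $\log$ is exactly absorbed by the monodromy action of $\ZZ^r$ on $H$.

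To describe the closure I would determine, for each point $(t_0, \xi_0)$ with $t_{0,j} = 0$ precisely for $j$ in some $S \subseteq \{1, \ldots, r\}$, which sequences $(t_m, A(t_m)a_m)$ in $T$ can converge to it. If $\{a_m\}$ stays bounded, then eventually $a_m = a$ with $N_j a = 0$ for $j \in S$, which accounts for the monodromy-invariant contribution in the $S$-directions. The interesting case is $a_m \to \infty$, where cancellations between the diverging $a_m$ and the diverging polynomial $A(t_m)$ produce finite limits; this is what allows the positive-dimensional fibers promised in the introduction. To package the equations, for any linear functional $\lambda \in H_\CC^\vee$ killed by $\transp{N_j}$ for all $j \notin S$, a direct calculation gives $\transp{A(t)}\lambda = \exp\bigl(\sum_{j \in S} \ell_j \transp{N_j}\bigr)\lambda$, and therefore $\pair{\lambda}{A(t)a}$ is a polynomial in the $\ell_j$ with $j \in S$. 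The holomorphic function $u_\lambda = \exp\bigl(2\pi i \pair{\lambda}{\xi}\bigr)$ on $T(\shVb)$ then satisfies, on $T$, a relation of the shape $u_\lambda = \prod_{j \in S} t_j^{c_j}$, provided that $\lambda$ lies far enough into the step of the weight filtration of the $\transp{N_j}$ to kill all quadratic and higher contributions. Choosing a basis of such $\lambda$'s for each stratum cuts out the piece of $\Tb$ over that stratum by binomial equations in the $t_j$ and the $u_\lambda$.

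The three conclusions then follow directly. For (i), each irreducible component of $\Tb$ is a zero locus of holomorphic expressions in the $t_j$ and the $u_\lambda$, and reducedness is visible on the parametrizing chart for each component. For (ii), $p$ is holomorphic as the restriction of the projection $T(\shVb) \to \Mb$, and $\finv[p](M) = T$ because the lattices $H_t \subset \shV_t$ form a discrete, non-accumulating family over $M$, so $T$ is already closed in $T(\shV)|_M$ and taking closure adds nothing over $M$. For (iii), equations of the form $u_\lambda = \prod_{j \in S} t_j^{c_j}$ are binomial in the monomial coordinates $\{t_j\} \cup \{u_\lambda\}$, and their normalization is therefore locally an affine toric variety. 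The main obstacle is the $a_m \to \infty$ analysis: one must classify exactly which unbounded sequences of lattice points produce finite limits and select enough functionals $\lambda$ to obtain complete and minimal defining equations. Once this combinatorial step is settled, the toric structure of the normalization is essentially forced by the binomial shape of those equations.
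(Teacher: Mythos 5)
Your proposal has the right overall shape (local Deligne coordinates, parametrize $T$, determine the closure by studying convergent sequences, read off equations, normalize), and the analogies with the paper's proof are real. But the account of which sequences converge contains two substantive errors, and the one genuinely hard step is acknowledged but not solved.

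First, the assertion that the interesting case is $a_m \to \infty$ and that this is what produces the positive-dimensional fibers has things exactly backwards. The paper proves (Proposition~\ref{prop:6steps}) that along \emph{any} subsequence of $T$ converging to a boundary point, the lattice element can be taken \emph{constant} after passing to a further subsequence. So the lattice part never needs to go to infinity; the positive-dimensional fibers arise instead from a single fixed $h$ that is only \emph{partially} monodromy-invariant, as one varies the ``angular'' parameters $w_j$ of approach to the boundary: the closure over $(0,\dotsc,0)$ coming from one sheet $C(h)$ is the $k$-dimensional image of $w \mapsto e^{-(w_1 M_1 + \dotsb + w_k M_k)}h$, not a limit of unbounded lattice points. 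Second, your convergence criterion for bounded $a_m$---``eventually $a_m = a$ with $N_j a = 0$ for $j \in S$''---is too strong. The correct condition (Proposition~\ref{prop:closure}) is that there exist \emph{positive integers} $a_1, \dotsc, a_n$ with $\sum a_j N_j h = 0$; a constant $h$ can yield a finite limit even when $N_j h \neq 0$ for every $j \in S$, provided one lets $\Im \ell_j$ go to infinity at rates proportional to the $a_j$. Detecting precisely this phenomenon is what forces the hardest part of the paper's argument---the CDK-style scale decomposition $y(m) = \sum_i \tau_i(m)\theta^i + \eta(m)$ with $\tau_1 \gg \tau_2 \gg \dotsb$ and the descending induction showing each $N^\alpha h(m)$ is eventually constant and killed by $N(m)$. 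You flag this (``one must classify exactly which unbounded sequences of lattice points produce finite limits'') as the ``main obstacle,'' which is honest, but it is exactly the content of the proof that is missing, and your stated convergence dichotomy would lead you in the wrong direction when you tried to fill it in.

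On the equations: the paper's defining system for a single $\Chb$ is not purely binomial. It consists of (A), the non-linear polynomial equations $v = e^{-(p_1(v)M_1 + \dotsb + p_k(v)M_k)}h$ exhibiting the fiber direction as a closed embedding, and (B), equations $f_b\bigl(t_j e^{-2\pi i\sum_{s\le k} a_{j,s}p_s(v)}\bigr) = 0$ that are binomial in monomial variables only after the analytic reparametrization by the $p_s(v)$. Your functionals $u_\lambda = \exp\bigl(2\pi i\pair{\lambda}{\xi}\bigr)$ produce relations whose exponents depend on the particular sheet $C(h)$ (because lattice coordinates like $\xi$-components of $h$ enter multiplicatively), so they cut out one sheet at a time rather than the full local closure, and they do not recover the degree-$>1$ fiber-direction equations. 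The toric conclusion in (iii) does ultimately come from a monomial map, as you suspect; but in the paper it is the \emph{monomial part of the parametrization $g$} (the base-direction factor), combined with the closed-embedding fiber factor, that gives the product structure and the locally toric normalization via Cox's description of non-normal toric varieties, and Lemma~\ref{lem:fincomps} is also needed to know that only finitely many sheets meet so that normalization separates them into finitely many toric pieces.
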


\begin{proof}
To show that $\Tb$ is an analytic subset, we need to show that it is locally defined
by analytic equations inside the complex manifold $T(\shVb)$. For an arbitrary point
of $\Mb$, take a neighborhood isomorphic to $\Delta^n$ in which $\Mb \setminus M$ is
defined by the equation $t_1 \dotsm t_r = 0$. Then Proposition~\ref{prop:local} in
Section~\ref{sec:localequations} below states precisely that the closure of $T$ over
$\Delta^n$ is a reduced analytic subset, and this establishes \ref{it:ASi}.

The assertion in \ref{it:ASii} that $\finv[p](M) = T$ follows from the corresponding
statement over each coordinate neighborhood $\Delta^n$, also proved below (see the
discussion at the end of Section~\ref{sec:closure}). Finally, the statement about the
normalization of $\Tb$ may be found in Section~\ref{sec:singularities}.
\end{proof}


\section{The local situation} \label{sec:local}

Determining the closure $\Tb$ inside the total space of the canonical extension is
really a local problem, and we may restrict our attention to what happens in a small
polydisk neighborhood of a point $P \in \Mb$. Let $\Delta^n \subseteq \Mb$ be such a
neighborhood, with local holomorphic coordinates $t_1, \dotsc, t_n$ centered at the
point $P$ in question.  We assume that, in these coordinates, $\Mb \setminus M$ is
defined by the equation $t_1 \dotsm t_r = 0$.  When $P$ is a boundary point, we get
$r > 0$, but the case of a point in $M$ is included by taking $r = 0$. In any case,
we have $\dstn{n} \subseteq M$.

To avoid having to treat various cases based on the value of $r$, we will restrict
the local system $\locsysH$ to the set $\dstn{n}$, and compute the closure of only
this piece inside the total space of the canonical extension over $\Delta^n$. We
shall argue later, at the end of Section~\ref{sec:closure}, that the result is the
same.

The fundamental group of $\dstn{n}$ is isomorphic to $\ZZ^n$. Let $H \simeq \ZZ^d$ be
the fiber of the local system at some point in $\dstn{n}$; by assumption, the
monodromy action of $\ZZ^n$ on $\ZZ^d$ is by unipotent matrices. Let $T_j \in
\Aut_{\ZZ}(\ZZ^d)$ be the matrix corresponding to the $j$-th standard generator of
$\ZZ^n$, and put 
\[
	N_j = -\log T_j = \sum_{n = 1}^{\infty} \frac{1}{n} (\id - T_j)^n.
\]
This is well-defined because $(\id - T_j)^n = 0$ for large values of $n$. The
matrices $N_j$ are nilpotent, with rational entries, and commute with one another.  

In the given system of coordinates, we now describe how the local system is embedded
into the total space of the canonical extension. To begin with,
\[
	\HH^n \to \dstn{n}, \qquad (z_1, \dotsc, z_n) \mapsto 
		\bigl( e^{2\pi i z_1}, \dotsc, e^{2 \pi i z_n} \bigr),
\]
is the universal covering space of $\dstn{n}$. The canonical extension of $\locsysH
\tensor \shO_{\dstn{n}}$ over $\Delta^n$ is isomorphic to the trivial vector bundle 
\[
	\shO_{\Delta^n} s_1 \oplus \dotsb \oplus \shO_{\Delta^n} s_d;
\]
here $s_i$ is the section of $\locsysH \tensor \shO_{\dstn{n}}$ on $\dstn{n}$
whose pullback to $\HH^n$ is given by the map
\[
	\tilde{s}_i \colon \HH^n \to \ZZ^d, \qquad 
		\tilde{s}_i(z) = e^{z_1 N_1 + \dotsb + z_n N_n} e_i,
\]
$e_i$ being one of the standard basis elements of $\ZZ^d$ (see the appendix for
details). The total space of the canonical extension is thus isomorphic to $\Delta^n
\times \CC^d$, using this frame. 

When the local system is pulled back to the universal covering space $\HH^n$, it
becomes of course trivial. At any given point $z = (z_1, \dotsc, z_n)$ of $\HH^n$, a
class $h \in \ZZ^d$ in the fiber of the trivial local system has coordinates
$e^{-(z_1 N_1 + \dotsb + z_n N_n)} h$ with respect to the given framing for the
canonical extension. It follows that the point $(z, h) \in \HH^n \times \ZZ^d$
has coordinates
\[
	\bigl( e^{2\pi i z_1}, \dotsc, e^{2\pi i z_n}, 
		e^{-(z_1 N_1 + \dotsb + z_n N_n)} h \bigr)
\]
in $\Delta^n \times \CC^d$. The total space of the local system, when embedded into
that of the canonical extension, is thus the image of the holomorphic map
\[
	f \colon \HH^n \times \ZZ^d \to \Delta^n \times \CC^d,
\]
defined by the rule
\begin{equation} \label{eq:param}
	(z_1, \dotsc, z_n, h) \mapsto 
		\bigl( e^{2\pi i z_1}, \dotsc, e^{2\pi i z_n}, 
			e^{-(z_1 N_1 + \dotsb + z_n N_n)} h \bigr).
\end{equation}
The closure of this image will be computed in the following section.


\section{Description of the closure} \label{sec:closure}

We now determine which points inside the total space of the canonical extension
belong to the closure of the local system. As explained in the previous section, this
is a local question; we chose a neighborhood $\Delta^n \subseteq \Mb$ of an arbitrary
point $P \in \Mb$, and study the closure over that neighborhood. 

According to the description above, the total space of the local
system over $\dstn{n}$ is the image of the holomorphic map $f$ given in
\eqref{eq:param}. As it stands, that map is not one-to-one; when the real parts $x_j
= \Re z_j$ are restricted to $0 \leq x_1, \dotsc, x_n < 1$, however, every point in
the image is parametrized only once.

The remainder of this section is devoted to proving the following proposition, which
describes the points in the closure of the image of the map $f$.  As written, it only
makes a statement about points that lie over the origin in $\Delta^n$, which is to
say over the point $P \in \Mb$.  But as we are free to place $P$ wherever we please,
we really get a description of all the points in the closure. 

\begin{prop} \label{prop:closure}
A point in $\Delta^n \times \CC^d$ over $(0, \dotsc, 0) \in \Delta^n$ is in the
closure of the image of $f$ if, and only if, it is of the form
\[
	\bigl( 0, \dotsc, 0, e^{-(w_1 N_1 + \dotsb + w_n N_n)} h \bigr);
\]
here $h \in \ZZ^d$ is such that $a_1 N_1 h + \dotsb + a_n N_n h = 0$ for certain
positive integers $a_1, \dotsc, a_n$, while $w_1, \dotsc, w_n \in \CC$ can be arbitrary
complex numbers.

For each limit point, there is an arc (for suitably small $\eps > 0$)
\[
	\Delta(\eps) \to \Delta^n \times \CC^d,
\]
of the form
\[
	t \mapsto \bigl( t^{a_1} e^{2\pi i w_1}, \dotsc, t^{a_n} e^{2\pi i w_n}, 
		e^{-(w_1 N_1 + \dotsb + w_n N_n)} h \bigr),
\]
contained in the image of $f$ for $t \neq 0$, and passing through the limit point at $t = 0$.
\end{prop}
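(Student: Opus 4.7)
The plan is to establish the two directions separately. For the ``if'' direction, which simultaneously produces the arc: assume $h \in \ZZ^d$ satisfies $\sum_{j=1}^n a_j N_j h = 0$ for positive integers $a_j$, and let $w \in \CC^n$ be arbitrary. For $t \in \Delta(\eps) \setminus \{0\}$, choose any $s \in \HH$ with $e^{2\pi i s} = t$, and set $z_j = w_j + a_j s$. Then $z \in \HH^n$ for $|t|$ small, $e^{2\pi i z_j} = t^{a_j} e^{2\pi i w_j}$, and, using commutativity of the $N_j$,
\[
  e^{-\sum_j z_j N_j} h \;=\; e^{-\sum_j w_j N_j}\cdot e^{-s \sum_j a_j N_j} h \;=\; e^{-\sum_j w_j N_j} h ,
\]
the second factor acting as the identity on $h$ because of the relation $\sum a_j N_j h = 0$. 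The result is manifestly independent of the choice of lift $s$, so the formula defines a single-valued holomorphic arc on $\Delta(\eps)$ lying in the image of $f$ for $t \neq 0$ and producing the prescribed limit point at $t = 0$.

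For the ``only if'' direction, take a sequence $(z^{(k)}, h^{(k)}) \in \HH^n \times \ZZ^d$ with $f(z^{(k)}, h^{(k)}) \to (0, v)$. Using the invariance $f(z + m, T_1^{-m_1} \dotsm T_n^{-m_n} h) = f(z, h)$ for $m \in \ZZ^n$ (the deck action of the universal cover), I translate so that each $\Re z_j^{(k)} \in [0, 1)$; this modifies $h^{(k)}$ within $\ZZ^d$. After passing to a subsequence, $\Re z_j^{(k)}$ converges, and the convergence $e^{2\pi i z_j^{(k)}} \to 0$ forces $\Im z_j^{(k)} \to +\infty$ for every $j$. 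A further subsequence ensures that $\Im z^{(k)} / \lvert \Im z^{(k)} \rvert$ converges to a unit vector $\xi \in \RR_{\geq 0}^n$.

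The heart of the argument is extracting the data $(a, h, w)$. The key structural facts are that $e^{-\sum_j z_j N_j}$ is a polynomial in $z$ (the $N_j$ commute and are nilpotent), that the $N_j$ have rational entries, and that the $h^{(k)}$ are integral. Analyzing the top-degree monomials in $\Im z^{(k)}$ of the polynomial $e^{-\sum_j z_j^{(k)} N_j} h^{(k)}$, I first argue that $h^{(k)}$ must be bounded, so that on a further subsequence it is equal to a constant $h \in \ZZ^d$; otherwise the divergent polynomial growth could not be cancelled by integer vectors to yield a finite limit. Convergence then forces the leading growth in $\Im z^{(k)}$ to cancel, producing a relation $\sum_j \xi_j N_j h = 0$; if $\xi$ has some zero components, a nested analysis of the subdominant orders yields additional relations $N_j h = 0$ along the collapsed directions. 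Together these give a $\QQ$-defined linear subspace of relations containing a strictly positive vector, and clearing denominators provides positive integers $a_j$ with $\sum a_j N_j h = 0$. The vector $w$ is then determined (up to the kernel of $e^{-\sum w_j N_j}$) by the requirement $v = e^{-\sum_j w_j N_j} h$.

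The principal obstacle is the simultaneous handling of the two potentially divergent pieces: the integer vectors $h^{(k)}$, which need not be bounded a priori, and the imaginary parts $\Im z^{(k)} \to \infty$. Their product $e^{-\sum z_j^{(k)} N_j} h^{(k)}$ can converge to a finite $v$ only through delicate cancellation, and it is precisely the integrality of $h^{(k)}$ together with the rationality of the $N_j$ that controls this cancellation and ultimately supplies the positive integer direction $a$ claimed in the proposition.
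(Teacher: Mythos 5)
Your ``if'' direction is correct and essentially identical to the paper's, and the initial reductions in the ``only if'' direction (normalizing real parts via the deck action, passing to a subsequence, noting $\Im z_j^{(k)} \to +\infty$) are also on track. The problem is that the heart of the ``only if'' direction is asserted rather than proved, and the one concrete intermediate claim you do make looks wrong. First, the boundedness of $h^{(k)}$ (``otherwise the divergent polynomial growth could not be cancelled'') is exactly the delicate point; the paper proves it by a descending induction on the multi-index degree $\lvert\alpha\rvert$, showing successively that $N^{\alpha}h(m)$ is eventually constant (using integrality and rationality, which you correctly identify as the key structural inputs, but never actually deploy). Second, and more seriously, the single unit-vector limit $\xi = \lim \Im z^{(k)}/\lvert\Im z^{(k)}\rvert$ only captures the \emph{fastest}-growing direction; when $\xi_j = 0$ for some $j$, it does not follow that $N_j h = 0$. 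The paper replaces your $\xi$ with the multiscale decomposition of Cattani--Deligne--Kaplan, $y(m) = \tau_1(m)\theta^1 + \dotsb + \tau_r(m)\theta^r + \eta(m)$ with the ratios $\tau_i/\tau_{i+1} \to \infty$, and the induction produces $\sum_j \theta_j^i N_j h = 0$ for \emph{every} scale $i$; the positivity of the desired relation then comes from the \emph{slowest} scale $\theta^r$, whose components are all positive precisely because every $y_j(m)\to\infty$. Your ``nested analysis of subdominant orders'' is the right instinct, but without the CDK decomposition (or an equivalent device) it neither yields $N_j h = 0$ in the collapsed directions nor produces a strictly positive relation, so the argument does not close as written.

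A smaller issue: in your last paragraph you say $w$ is determined ``up to the kernel of $e^{-\sum w_j N_j}$''; that operator is invertible, so what you mean is the non-uniqueness of $w\in\CC^n$ with $e^{-\sum w_j N_j}h = v$, coming from the kernel of $w \mapsto \sum_j w_j N_j h$. This is harmless, since the proposition allows arbitrary $w$, but the phrasing should be corrected.
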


One half of this is easy to prove---if $h \in \ZZ^d$ satisfies $a_1 N_1 h + \dotsb +
a_n N_n h = 0$ for positive integers $a_1, \dotsc, a_n$, then every point of the
form
\[
	\bigl( 0, \dotsc, 0, e^{-(w_1 N_1 + \dotsb + w_n N_n)} h \bigr)
\]
is in the closure of the image of $f$. Indeed, taking the imaginary part of $z \in
\HH$ sufficiently large to have $\Im(a_j z + w_j) > 0$ for all $j$, we get
\begin{align*}
	f(a_1 z + w_1, \dotsc, a_n z + w_n, h) &= 
		\bigl( e^{2\pi i a_1 z} e^{2\pi i w_1}, \dotsc, e^{2\pi i a_n z} e^{2\pi i w_n},
			e^{-\sum (a_j z + w_j) N_j} h \bigr) \\ &=
		\bigl( t^{a_1} e^{2\pi i w_1}, \dotsc, t^{a_n} e^{2\pi i w_n},
			e^{-\sum w_j N_j} e^{-z \sum a_j N_j} h \bigr) \\ &=
		\bigl( t^{a_1} e^{2\pi i w_1}, \dotsc, t^{a_n} e^{2\pi i w_n}, 
			e^{-\sum w_j N_j} h \bigr),
\end{align*}
having set $t = \exp(2\pi i z)$. For $t \neq 0$, these points are all in the image of
$f$; as $t \to 0$, in other words, as $\Im z \to \infty$, they approach the point
$\bigl( 0, \dotsc, 0, e^{-(w_1 N_1 + \dotsb + w_n N_n)} h \bigr)$, which is consequently
in the closure.

To prove the converse, we take a sequence of points in the image that converges to
some point of $\{(0, \dotsc, 0)\} \times \CC^d$, and show that its limit is of the
stated form. So let
\[
	\bigl( z(m), h(m) \bigr) = \bigl( z_1(m), \dotsc, z_n(m), h(m) \bigr) \in
		\HH^n \times \ZZ^d
\]
be a sequence of points such that $f(z(m), h(m))$ converges to a point over $(0,
\dotsc, 0)$. This means that each $y_j(m) = \Im z_j(m)$ is going to infinity, and
that the sequence of vectors
\[
	e^{-\sum z_j(m) N_j} h(m) \in \CC^d
\]
is convergent as $m \to \infty$. Changing the values of $h(m)$, if
necessary, we may in addition assume that the real parts $x_j(m) = \Re z_j(m)$
satisfy $0 \leq x_j(m) \leq 1$.

In the course of the argument, we shall frequently have to pass to a subsequence of
$\bigl( z(m), h(m) \bigr)$. To avoid clutter, this will not be indicated in the
notation---in each case, the subsequence will be denoted by the same letters $\bigl(
z(m), h(m) \bigr)$ as the original sequence. Since it should not lead to any
confusion, we shall avail ourselves of this convenient device.

Keeping this convention in mind, we now proceed in several steps.

\subsection*{Step 1}

The sequence of real parts $x_j(m)$ is bounded, for each $j = 1, \dotsc, n$, and we
can thus pass to a subsequence where each $x_j(m)$ converges. The vectors
\[
	e^{\sum x_j(m) N_j} e^{- \sum z_j(m) N_j} h(m) = e^{- i \sum y_j(m) N_j} h(m)
\]
still form a convergent sequence in this case, and so the $x_j(m)$ really play no
role for the remainder of the argument.

\subsection*{Step 2}

While all imaginary parts $y_j(m)$ are going to infinity, this may happen at greatly
different rates. To make their behavior more tractable, we use the following
technique, borrowed from the paper by Cattani, Deligne, and Kaplan \cite{CDK}*{p.~494}. 
Let $y(m) = \bigl( y_1(m), \dotsc, y_n(m) \bigr)$. By taking a further subsequence,
we can arrange that
\[
	y(m) = \tau_1(m) \theta^1 + \dotsb + \tau_r(m) \theta^r + \eta(m),
\]
where $\theta^1, \dotsc, \theta^r \in \RR^n$ are constant vectors with nonnegative
components, and where the ratios
\begin{equation} \label{eq:tau}
	\tau_1(m) / \tau_2(m), \dotsc, \tau_{r-1}(m) / \tau_r(m), \tau_r(m)
\end{equation}
are all going to infinity. The remainder term $\eta(m)$, on the other hand, is
convergent. We can even assume that
\[
	0 \leq \theta_j^1 \leq \theta_j^2 \leq \dotsm \leq \theta_j^r
\]
for all $j$; because $y_j(m) \to \infty$, all components of the last vector $\theta^r$
have to be positive real numbers.

Now define
\[
	N(m) \defeq \sum_{j=1}^n \bigl( y_j(m) - \eta_j(m) \bigr) N_j.
\]
As in Step~1, the convergence of the expression $e^{-i \sum \eta_j(m) N_j}$ makes the
$\eta_j$ essentially irrelevant to the rest of the argument---the sequence
\[
	e^{-i N(m)} h(m)
\]
is still a convergent sequence.

\subsection*{Step 3}

For each multi-index $\alpha = (\alpha_1, \dotsc, \alpha_n) \in \NN^n$, we set
\[
	N^{\alpha} \defeq \prod_{j=1}^n N_j^{\alpha_j}.
\]
Since the $N_j$ are commuting nilpotent operators, $N^{\alpha} = 0$ whenever
$\abs{\alpha} = \alpha_1 + \dotsb + \alpha_n$ is sufficiently large.

We can thus let $p \geq 0$ be the smallest integer for which there is a subsequence
of $\bigl( z(m), h(m) \bigr)$ with
\[
	\text{$N^{\alpha} h(m) = 0$ 
			for all multi-indices $\alpha$ with $\abs{\alpha} \geq p + 1$.}
\]
Passing to this subsequence, we find that when $\abs{\alpha} = p$, the sequence
\[
	N^{\alpha} e^{-i \sum y_j(m) N_j} h(m) = N^{\alpha} h(m)
\]
is convergent. However, it takes its values in a discrete set (in fact, there is an
integer $M > 0$ such that each coordinate of $N^{\alpha} h(m)$ is in $\ZZ \lbrack 1/M
\rbrack$, and $M$ depends only on $\alpha$ and the $N_j$), and so it has to be
eventually constant. If we remove finitely many terms from the sequence, we can
therefore achieve that
\[
	h^{\alpha} \defeq N^{\alpha} h(m)
\]
is constant whenever $\abs{\alpha} = p$. Moreover, we have $N(m) h^{\alpha} = 0$ by
the choice of $p$.

\subsection*{Step 4} 

At this point, we can use an inductive argument to get the conclusion of Step~3 for
all multi-indices $\alpha$ with $\abs{\alpha} \leq p$. Thus let us assume that we
already have a subsequence $\bigl( z(m), h(m) \bigr)$ for which $h^{\alpha} =
N^{\alpha} h(m)$ is constant and $N(m) h^{\alpha} = 0$, whenever $\alpha$ is a
multi-index with $p' \leq \abs{\alpha} \leq p$.  If $p' > 0$, we now show how to get
the same statement with $p'$ replaced by $p' - 1$.

Consider a multi-index $\alpha$ with $\abs{\alpha} = p' - 1$. Then
\begin{equation} \label{eq:indstep}
\begin{split}
	N^{\alpha} e^{-i N(m)} h(m) = 
	N^{\alpha} h(m) &- i N(m) N^{\alpha} h(m) \\
		&+ \sum_{s=1}^{p-p'} (-i)^{s+1} N(m)^s \cdot N(m) N^{\alpha} h(m)
\end{split}
\end{equation}
is again convergent. Since $\alpha + e_j$ has length $p'$, we see that
\[
	N(m) N^{\alpha} h(m)
		= \sum_{j=1}^n \bigl( y_j(m) - \eta_j(m) \bigr)  N^{\alpha + e_j} h(m) 
		= \sum_{j=1}^n \bigl( y_j(m) - \eta_j(m) \bigr) h^{\alpha + e_j};
\]
by the inductive hypothesis, the last term in \eqref{eq:indstep} is actually zero. 

Thus the sequence $N^{\alpha} h(m) - i N(m) N^{\alpha} h(m)$
is itself convergent, implying convergence of its real and imaginary parts
separately. As before, the sequence of real parts $N^{\alpha} h(m)$ has to be eventually
constant, and after omitting finitely many terms, we can assume that it is constant. Let
\[
	h^{\alpha} \defeq N^{\alpha} h(m)
\]
be that constant value. Then the convergence of the imaginary part
\[
	N(m) h^{\alpha} = N(m) N^{\alpha} h(m) 
		= \sum_{i=1}^r \tau_i(m) \sum_{j=1}^n \theta_j^i N^{\alpha + e_j} h(m) 
		= \sum_{i=1}^r \tau_i(m) \sum_{j=1}^n \theta_j^i h^{\alpha + e_j},
\]
together with the behavior of the $\tau_i(m)$ described in \eqref{eq:tau}, shows that 
\[
	\sum_{j=1}^n \theta_j^i h^{\alpha + e_j} = 0
\]
for all $i$. But this says that, in fact, $N(m) h^{\alpha} = 0$. The statement is
thus proved for all multi-indices $\alpha$ of length $\abs{\alpha} = p' - 1$ as well.

\subsection*{Step 5}

From Step~4, we conclude that, on a suitable subsequence, $h^{\alpha} = N^{\alpha}
h(m)$ is constant for all $\alpha$, and satisfies $N(m) h^{\alpha} = 0$. In
particular, $h(m)$ is itself constant, equal to a certain element $h = h^{(0, \dotsc,
0)} \in \ZZ^d$. Moreover, we have $N(m) h = 0$ for all $m$. 

On the one hand, we now find that, along the subsequence we have chosen in the
previous steps, our original convergent sequence simplifies to
\[
	e^{-\sum z_j(m) N_j} h(m) 
		= e^{-\sum (x_j(m) + i \eta_j(m)) N_j} e^{-i N(m)} h 
		= e^{-\sum (x_j(m) + i \eta_j(m)) N_j} h.
\]
If we set $w_j = \lim_{m \to \infty} \bigl( x_j(m) + i \eta_j(m) \bigr)$, then the
limit of the sequence is of the form $e^{-\sum w_j N_j} h$, which was part of the
assertion in Proposition~\ref{prop:closure}.

On the other hand, we conclude from
\[
	N(m) h = \sum_{i=1}^r \tau_i(m) \sum_{j=1}^n \theta_j^i N_j h = 0
\]
that 
\[
	\sum_{j=1}^n \theta_j^i N_j h = 0
\]
for all $i = 1, \dotsc, r$.

\subsection*{Step 6}

By Step~5, we know that the $n$ vectors $N_j h$ are linearly dependent; the
coefficients $\theta_j^r$ in the relation (for $i = r$) are positive real numbers.
But as the vectors themselves are in fact in $\QQ^d$, we can also find a relation
with positive rational coefficients. Taking a suitable multiple, we then obtain
positive integers $a_1, \dotsc, a_n$ satisfying
\[
	\sum_{j=1}^n a_j N_j h = 0.
\]
The remaining assertion of the proposition is thereby established, and this finishes
the proof.

For later use, we record the result of the six steps in the following proposition.

\begin{prop} \label{prop:6steps}
Let $\bigl( z(m), h(m) \bigr) \in \HH^n \times \ZZ^d$ be a sequence of points with
$x_j(m) = \Re z_j(m) \in \unit$, and assume that $f\bigl(z(m), h(m)\bigr)$ converges
to a point
in $\Delta^n \times \CC^d$ over $(0, \dotsc, 0) \in \Delta^n$. Then there is a
subsequence, still denoted $\bigl( z(m), h(m) \bigr)$, for which $h(m)$ is constant.
\end{prop}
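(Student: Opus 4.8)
The plan is almost immediate, because Proposition~\ref{prop:6steps} simply extracts one conclusion of the six-step argument already used to prove Proposition~\ref{prop:closure}; so I would just re-run Steps~1 through~5 (or cite them directly). Starting from a sequence $\bigl(z(m),h(m)\bigr)\in\HH^n\times\ZZ^d$ with $x_j(m)=\Re z_j(m)\in\unit$ and with $f\bigl(z(m),h(m)\bigr)$ convergent over the origin, I pass to a subsequence making each real part $x_j(m)$ converge (Step~1), so that the convergent datum becomes $e^{-i\sum y_j(m)N_j}h(m)$, and then apply the splitting technique of \cite{CDK} (Step~2) to write $y(m)=\sum_i\tau_i(m)\theta^i+\eta(m)$ with $\eta(m)$ convergent and the ratios in \eqref{eq:tau} tending to infinity; absorbing $\eta(m)$ as well leaves $e^{-iN(m)}h(m)$ convergent, where $N(m)=\sum_j\bigl(y_j(m)-\eta_j(m)\bigr)N_j$.

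The heart of the matter is the downward induction of Steps~3 and~4. I let $p$ be minimal so that, on some subsequence, $N^\alpha h(m)=0$ for every $\alpha$ with $\abs{\alpha}\ge p+1$, and then, decreasing $p'$ from $p$ to $0$, I show that $h^\alpha\defeq N^\alpha h(m)$ can be taken constant with $N(m)h^\alpha=0$ whenever $\abs{\alpha}=p'$. The mechanism is always the same: expand $N^\alpha e^{-iN(m)}h(m)$, use the inductive hypothesis to annihilate the terms of order $\ge 2$ in $N(m)$, deduce that $N^\alpha h(m)-iN(m)N^\alpha h(m)$ converges, separate real and imaginary parts, note that the real part $N^\alpha h(m)$ lies in a discrete subset of $\CC^d$ hence is eventually constant, and finally read off from the growth rates of the $\tau_i(m)$ that $\sum_j\theta_j^i h^{\alpha+e_j}=0$ for each $i$, i.e.\ $N(m)h^\alpha=0$. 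Taking $\abs{\alpha}=0$ at the end gives exactly that $h(m)=h^{(0,\dotsc,0)}$ is constant on the final subsequence, which is the assertion.

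The one point that needs a word of care — and the step I expect to be the main, if mild, obstacle — is the discreteness input underlying the repeated claim that a convergent sequence is eventually constant. One must check that each coordinate of $N^\alpha h(m)$ lies in $\ZZ[1/M]$ for an integer $M$ depending only on $\alpha$ and the $N_j$, not on $m$: this holds because the $N_j$ have rational entries, so a fixed $M$ clears all their denominators, whence $N^\alpha$ has entries in $\ZZ[1/M]$ and, since $h(m)\in\ZZ^d$, we get $N^\alpha h(m)\in\ZZ[1/M]^d$, a discrete subset of $\CC^d$. A convergent sequence in a discrete set is eventually constant, so after deleting finitely many terms it is constant. Everything else is bookkeeping about the tower of nested subsequences, which the notational convention fixed just before Step~1 keeps under control.
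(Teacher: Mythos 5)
Your proposal is correct and follows essentially the same route as the paper, which records Proposition~\ref{prop:6steps} as a byproduct of Steps~1 through~5 in the proof of Proposition~\ref{prop:closure}. Your treatment of the discreteness point (rational entries of the $N_j$ forcing $N^\alpha h(m) \in \ZZ[1/M]^d$ for a fixed $M$) matches the parenthetical remark the paper makes in Step~3.
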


\subsection*{A technical point}

For the sake of convenience, we had restricted the local system from the open set $U
= \Delta^n \cap M$---whose complement in $\Delta^n$ is the normal crossing
divisor with equation $t_1 \dotsm t_r = 0$---to $\dstn{n}$, and computed the closure
of only this smaller piece. We now have to argue that this makes no difference.

So let $T_U \subseteq U \times \CC^d$ be the total space of the local system over
$U$; each connected component of $T_U$ is a covering space of $U$, and therefore the
subset $T_U \cap \bigl( \dstn{n} \times \CC^d \bigr)$ is itself dense in $T_U$. But
from this circumstance, it follows immediately that it has the same closure in
$\Delta^n \times \CC^d$ as $T_U$. Since $T_U$ is already closed inside of $U \times
\CC^d$, we see in particular that all points of $\TUb \setminus T_U$ have to lie over
the boundary divisior $t_1 \dotsm t_t = 0$.

(This last fact can also be inferred from Proposition~\ref{prop:closure}. For if we
take $P$ to be a point in $M$, the local system is already defined at $P$, and
trivial on a neighborhood $\Delta^n$ of that point. Consequently, the local monodromy
over $\dstn{n}$ is also trivial, and Proposition~\ref{prop:closure} shows that taking
the closure is only adding back a copy of $\ZZ^d$ over $P$. This means that we get
back the original fiber $\locsysH_P$ of the local system.)


\section{Local equations} \label{sec:localequations}

Now that we know which points are in the closure, we need to show that $\Tb$ is 
an analytic space. We shall do this by finding explicit local equations, over the
same coordinate neighborhoods that were used in the previous section.  Consequently,
$\Delta^n \subseteq \Mb$ will continue to denote a neighborhood of an arbitrary
point in $\Mb$, with local holomorphic coordinates $t_1, \dotsc, t_n$.

It has already been pointed out that the total space of the local system over $M$ has
countably many connected components. Locally, over the much smaller open set
$\Delta^n$, those components may break up even further. The map
\[
	f \colon \HH^n \times \ZZ^d \to \Delta^n \times \CC^d,
\]
parametrizing the total space of the local system over $\Delta^n$, was defined above
by the rule
\[
	(z_1, \dotsc, z_n, h) \mapsto 
		\bigl( e^{2\pi i z_1}, \dotsc, e^{2\pi i z_n}, 
				e^{-(z_1 N_1 + \dotsb + z_n N_n)} h \bigr).
\]
If we take any element $h \in H = \ZZ^d$, the image of
\[
	f(\argbl, h) \colon \HH^n \to \Delta^n \times \CC^d,
\]
denoted by $C(h)$, is one of the \emph{local} connected components of the total
space. Obviously, two such components $C(h_0)$ and $C(h_1)$ are either the same
(which is the case if $h_0$ and $h_1$ are in the same $\ZZ^n$-orbit), or disjoint;
this means that, typically, each component is equal to $C(h)$ for infinitely many $h
\in H$.

In the previous section, we have described the closure of
\[
	\bigcup_{h \in H} C(h)
\]
inside of $\Delta^n \times \CC^d$, but only as a set. To show that this closure is
actually an analytic space, we need to give holomorphic equations that define it
inside $\Delta^n \times \CC^d$. We first observe that, as a matter of fact, 
\[
	\overline{\bigcup\limits_{h \in H} C(h)} = \bigcup_{h \in H} \Chb.
\]
This is because of the description of the closure given in
Proposition~\ref{prop:closure}---any point in the closure is already in the closure
of one of the components $C(h)$.

Next, as would be expected if the closure is an analytic space, only finitely many of
the $\Chb$ can come together at the boundary. This is expressed in the following
lemma.

\begin{lem} \label{lem:fincomps}
At most finitely many distinct $\Chb$ can meet at any given point in $\Delta^n \times \CC^d$. 
\end{lem}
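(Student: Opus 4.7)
My plan is to reduce the statement to counting $\ZZ^n$-orbits on integer points of a single unipotent-group orbit, and then settle that count by commensurability of rank-$k$ lattices over~$\QQ$. First I dispatch the easy and partial cases. If $P = (t^0, v^0)$ has every $t_j^0 \neq 0$, the parametrization $f$ of Section~\ref{sec:local} is injective modulo $\ZZ^n$, so at most one $\Chb$ contains~$P$. If $t^0$ has some but not all zero coordinates, ``unwinding'' the nonzero ones by a change of variables reduces the problem to a fully-boundary point in a smaller polydisc. I therefore assume throughout that $P = (0, v^0)$.

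By Proposition~\ref{prop:closure}, $(0, v^0) \in \Chb$ iff there exist $w \in \CC^n$ and positive integers $a_1, \dotsc, a_n$ with $v^0 = e^{-\sum w_j N_j} h$ and $\sum a_j N_j h = 0$. Since $\sum a_j N_j$ commutes with the exponential, the second condition is equivalent to $\sum a_j N_j v^0 = 0$: either it fails, in which case no $\Chb$ contains $P$, or it holds for every integer $h$ in the $\CC^n$-orbit $\mathcal{O}_{v^0} = \bigl\{ e^{\sum u_j N_j} v^0 : u \in \CC^n \bigr\}$. In the latter case, the distinct $\Chb$'s through $P$ are in bijection with the $\ZZ^n$-orbits of $\mathcal{O}_{v^0} \cap \ZZ^d$.

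The main step is showing this orbit count is finite. If $\mathcal{O}_{v^0} \cap \ZZ^d$ is empty there is nothing to prove, so fix $h_0 \in \mathcal{O}_{v^0} \cap \ZZ^d$. Then $\mathcal{O}_{h_0} = \mathcal{O}_{v^0}$ is defined over $\QQ$ (the $N_j$ are rational, $h_0$ is integer) and, as an orbit of a commutative unipotent algebraic group, is $\QQ$-isomorphic to affine space $\mathbb{A}^k_\QQ$ with $k = \dim_\CC \mathcal{O}_{h_0}$. Under any such $\QQ$-isomorphism $\iota$ normalized by $\iota(h_0) = 0$, the $\CC^n$-action becomes translation on $\mathbb{A}^k$, and the image $\iota(\ZZ^n \cdot h_0)$ is a sublattice of $\QQ^k$ of rank $n - \dim_\CC \mathrm{Stab}_{\CC^n}(h_0) = k$ (using $\QQ$-rationality of the stabilizer, which is cut out by polynomial equations with rational coefficients). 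Meanwhile, the subgroup $L \subseteq \QQ^k$ generated by $\iota(\mathcal{O}_{h_0} \cap \ZZ^d)$ contains $\iota(\ZZ^n \cdot h_0)$ and is contained in $\mathbb{A}^k(\RR) \simeq \RR^k$, so it also has rank~$k$. Any two rank-$k$ subgroups of a $\QQ$-vector space of dimension $k$ are commensurable, so $\iota(\ZZ^n \cdot h_0)$ has finite index in $L$; projecting $\iota(\mathcal{O}_{h_0} \cap \ZZ^d)$ into the finite quotient $L / \iota(\ZZ^n \cdot h_0)$ then exhibits the set of $\ZZ^n$-orbits, hence the set of $\Chb$'s through $P$, as finite.

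The step I anticipate as the main obstacle is the claim that $L$ really has rank $k$---equivalently, that $\mathcal{O}_{h_0} \cap \ZZ^d$ is not thinner than the $\ZZ^n$-orbit itself. This amounts to a finiteness-of-class-number statement for the commutative unipotent $\QQ$-group $\{e^{\sum u_j N_j}\}$: the subgroup of integer points generated by $T_1, \dotsc, T_n$ must have finite index in its full integer-point group. Such finiteness is standard for commutative unipotent $\QQ$-groups, but I expect it to also fall out concretely from the monomial equations for $\Tb$ developed in the remainder of Section~\ref{sec:localequations}.
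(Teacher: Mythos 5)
Your approach is genuinely different from the paper's, and most of it is sound, but the crucial finiteness step has a real gap.

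The paper proves Lemma~\ref{lem:fincomps} by a soft topological argument: if infinitely many distinct $\Chb$ met at $Q$, one could build a sequence $\bigl(z(m),h(m)\bigr)$ with all $h(m)$ distinct and $f\bigl(z(m),h(m)\bigr)\to Q$, contradicting Proposition~\ref{prop:6steps} (which is precisely the output of Steps~1--5 of Section~\ref{sec:closure}). Your proposal instead reduces the lemma to a purely algebraic counting problem---finitely many $\ZZ^n$-orbits on the integer points of a single orbit of the commutative unipotent group $\{e^{\sum u_j N_j}\}$---which is a legitimately independent route and, if completed, would give a sharper, quantitative statement (a bound on the number of sheets in terms of an index $[L:\Lambda]$). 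The setup is correct: the bijection between sheet-closures through $(0,v^0)$ and $\ZZ^n$-orbits on $\mathcal{O}_{v^0}\cap\ZZ^d$ follows from Proposition~\ref{prop:closure}, the orbit is a $\QQ$-rational copy of $\mathbb{A}^k$, and $\iota(\ZZ^n\cdot h_0)$ is a rank-$k$ lattice.

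The gap is in the commensurability step. As stated, ``any two rank-$k$ subgroups of a $\QQ$-vector space of dimension $k$ are commensurable'' is false: $\ZZ\subset\ZZ[1/2]\subset\QQ$ is a counterexample with $k=1$. Commensurability holds for two \emph{lattices} (finitely generated free subgroups of full rank), but you have not shown that $L$, the subgroup generated by $\iota(\mathcal{O}_{h_0}\cap\ZZ^d)$, is a lattice rather than something like $\ZZ[1/p]^k$. You also misidentify the obstacle in the last paragraph: the rank of $L$ is automatically $k$ because $L\supseteq\iota(\ZZ^n\cdot h_0)$ sits inside $\QQ^k$; the genuine issue is whether $L$ has \emph{bounded denominators}, i.e.\ is finitely generated. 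This can in fact be supplied from Step~4 of Section~\ref{sec:localequations}: the inverse $\iota$ is given by polynomials $p_1,\dotsc,p_k$ with rational coefficients, so there is an integer $M$ with $\iota(\mathcal{O}_{h_0}\cap\ZZ^d)\subseteq\tfrac{1}{M}\ZZ^k$, whence $L$ is a lattice and $[L:\iota(\ZZ^n\cdot h_0)]<\infty$. Without this observation, the argument as written does not close.
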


\begin{proof}
Suppose, to the contrary, that infinitely many distinct $\Chb$ met at a certain point
$Q$ of the closure. Such a point $Q$ necessarily lies in the closure of infinitely
many distinct sheets $C(h)$. Moving the center $P$ of the coordinate system, if
necessary, we may assume that $Q$ is a point over $(0, \dotsc, 0) \in \Delta^n$. We
can then find a sequence of points
$\bigl( z(m), h(m) \bigr) \in \HH^n \times \ZZ^d$, with $0 \leq \Re z_j(m) \leq 1$
for all $j = 1, \dotsc, n$, such that $f\bigl(z(m), h(m)\bigr)$ converges to $Q$, but all
$h(m)$ are distinct. But such a sequence cannot exist by
Proposition~\ref{prop:6steps}. This contradiction proves that the number of
components meeting at $Q$ is indeed finite.
\end{proof}

We are now ready to determine equations for each of the closed subsets $\Chb$ of
$\Delta^n \times \CC^d$. Let $h \in H$ be an arbitrary element; we will give finitely
many holomorphic equations defining $\Chb$. As before, we break the argument down
into several steps.

\subsection*{Step 1} 

According to Proposition~\ref{prop:closure}, we get additional points in the closure
$\Chb$ only when $h$ is invariant under some part of the monodromy action.  Thus we
let $S \subseteq \ZZ^n$ be the subgroup of elements that leave $h$ invariant.  As a
subgroup of a free group, $S$ is itself free, say of rank $n-k$. If $k = n$, then
$C(h)$ is already closed; only when $k < n$ is the closure $\Chb$ strictly bigger
than $C(h)$. Since the first case is essentially trivial, we shall assume from now on
that $k < n$.

\subsection*{Step 2}
The quotient $\ZZ^n / S$ is a free abelian group.

\begin{proof}
Since $\ZZ^n$ acts on $H = \ZZ^d$ by unipotent transformations, we have that
\[
	T_1^{a_1} \dotsm T_n^{a_n} h = h \qquad \text{if, and only if,} \qquad
		a_1 N_1 h + \dotsb + a_n N_n h = 0.
\]
This means that $S$ is the kernel of the homomorphism
\[
	\ZZ^n \to \QQ^d, \qquad 
		(a_1, \dotsc, a_n) \mapsto a_1 N_1 h + \dotsb + a_n N_n h,
\]
and so the quotient $\ZZ^n / S$ embeds into $\QQ^d$. Since $\QQ^d$ is torsion-free,
the same has to be true for $\ZZ^n / S$; being finitely generated, $\ZZ^n / S$
is then actually free.
\end{proof}

\subsection*{Step 3}

Because of Step~2, we can now find an $n \times n$ matrix $A$, with
integer entries and $\det A = 1$, whose last $n-k$ columns give a basis for the
subgroup $S$. We then introduce new coordinates $(w_1, \dotsc, w_n) \in \CC^n$ by the
rule
\begin{equation} \label{eq:zcoord}
	z_i \defeq \sum_{j = 1}^n a_{i,j} w_j.
\end{equation}
Rewriting $z_1 N_1 + \dotsb z_n N_n$ in the form $w_1 M_1 + \dotsb + w_n M_n$, where
each
\[
	M_j \defeq \sum_{i = 1}^n a_{i,j} N_i
\]
is still nilpotent, we now have $M_{k+1} h = \dotsb = M_n h = 0$, while
the remaining $k$ vectors $M_1 h, \dotsc, M_k h$ are linearly independent.
Instead of $f$, we can then use the parametrization
\begin{equation} \label{eq:newparam}
	g \colon V \to \Delta^n \times \CC^d, \qquad
		(w_1, \dotsc, w_n) \mapsto \bigl( t_1, \dotsc, t_n, e^{-(w_1 M_1 + \dotsb
			+ w_k M_k)} h \bigr),
\end{equation}
of the sheet $C(h)$ under consideration; here
\[
	t_j = \prod_{s = 1}^n e^{2\pi i a_{j,s} w_s},
\]
and the map $g$ is defined on the open subset $V \subseteq \CC^n$ where all 
$\abs{t_j} < 1$.

\subsection*{Step 4}

We now analyze the term $e^{-(w_1 M_1 + \dotsb + w_k M_k)} h$ in the parametrization
$g$. As a matter of fact, the map
\[
	\CC^k \to \CC^d, \qquad (w_1, \dotsc, w_k) \mapsto 
		e^{-(w_1 M_1 + \dotsb + w_k M_k)} h,
\]
is a closed embedding, because the vectors $M_1 h, \dotsc, M_k h$ are linearly
independent. We will prove this by constructing an inverse---we show that there are
polynomials $p_1(v), \dotsc, p_k(v)$ in $v = (v_1, \dotsc, v_d)$, 
such that whenever $v$ is in the image, one has
\[
	(w_1, \dotsc, w_k) = \bigl( p_1(v), \dotsc, p_k(v) \bigr).
\]

\begin{proof}
We construct suitable polynomials by induction on the number $k$ of variables. If $k
= 0$, there is nothing to do. So let us assume that the existence of such polynomials is
known for $k - 1 \geq 0$ variables, and let us establish it for $k$.

For any multi-index $\alpha = (\alpha_1, \dotsc, \alpha_k) \in \NN^k$, we write
\[
	M^{\alpha} \defeq M_1^{\alpha_1} \dotsm M_{k}^{\alpha_k};
\]
these matrices are zero whenever $\abs{\alpha}$ is sufficiently large.
Among all multi-indices $\alpha$ with $M^{\alpha} h \neq 0$,
select one of maximal length $\abs{\alpha}$. Then $\abs{\alpha} \geq 1$, because the
vectors $M_j h$ are in particular nonzero, and without loss of generality we may
assume that $\alpha_k \geq 1$.  We have
\[
	M^{\alpha - e_k} v = 
		\bigl( \id - w_1 M_1 - \dotsb - w_{k-1} M_{k-1} \bigr) M^{\alpha - e_k} h 
			- w_k M^{\alpha} h.
\]
Because at least one of the components of $M^{\alpha} h$ is non-zero, we can now solve
for $w_k$ in the form
\[
	w_k = c_1 w_1 + \dotsb + c_{k-1} w_{k-1} + l(v),
\]
with $c_1, \dotsc, c_{k-1} \in \QQ$, and $l(v)$ a degree-one polynomial in $v$.
Substituting back, we obtain
\[
	e^{l(v) M_k} v 
		= e^{- w_1 (M_1 + c_1 M_k) - \dotsb - w_{k-1} (M_{k-1} + c_{k-1} M_k) } h,
\]
and, by the inductive hypothesis, $w_1, \dotsc, w_{k-1}$ can now be expressed as
polynomials in the coordinates of the vector $e^{l(v) M_k} v$, since the vectors
$(M_i + c_i M_k)h$ are of course still linearly independent. It is thus possible
to find polynomials in $v$ such that
\[
	(w_1, \dotsc, w_{k-1}) = \bigl( p_1(v), \dotsc, p_{k-1}(v) \bigr).
\]
Then $w_k = c_1 p_1(v) + \dotsb + c_{k-1} p_{k-1}(v) + l(v)$ is also a polynomial in
$v$, and the assertion is proved.
\end{proof}

\subsection*{Step 5}

The result of Step~4 now gives us half of the equations for the closed subset $\Chb$.
Indeed, we have seen that if $(t, v) \in \Delta^n \times \CC^d$ is a point of $C(h)$,
then it is in the image of $g$, and so its $v$-coordinates satisfy the relation
\begin{equation*} \label{eq:A}
	v = e^{-(p_1(v) M_1 + \dotsb + p_k(v) M_k)} h.
		\tag{A}
\end{equation*}
In components, these are $d$ polynomial equations for $v = (v_1, \dotsc, v_d)$. The
same equations obviously have to hold for every point in the closure $\Chb$.

\subsection*{Step 6}

Next, we turn our attention to the remaining $n$ coordinates $(t_1, \dotsc, t_n)$ of
the parametrization $g$. Each is of the form
\[
	t_j = \prod_{s = 1}^n e^{2\pi i a_{j,s} w_s}.
\]
Letting $u_j = \exp(2 \pi i w_{j})$, for $j = k+1, \dotsc, n$, we have
\[
	t_j = u_{k+1}^{a_{j,k+1}} \dotsm u_{n\vphantom{k+1}}^{a_{j\vphantom{k+1},n}} \cdot 
		e^{2\pi i (a_{j,1} w_1 + \dotsb + a_{j,k} w_k)}.
\]
The shape of these products leads us to consider the algebraic map
\begin{equation} \label{eq:toric}
	\CCstn{n-k} \to \CC^n, \qquad (u_{k+1}, \dotsc, u_n) \mapsto (x_1, \dotsc, x_n),
\end{equation} 
whose coordinates are given by
\begin{equation} \label{eq:algmap}
	x_j = \prod_{i=k+1}^n u_i^{a_{j, i}}.
\end{equation}
Because the map is given by polynomials, the (topological) closure of its image is
actually a closed algebraic subvariety of $\CC^n$, and as such defined by finitely
many polynomial equations
\[
	f_1(x_1, \dotsc, x_n) = \dotsb = f_e(x_1, \dotsc, x_n) = 0.
\]
In fact, because the original map is monomial, each $f_b(x)$ can be taken as a
binomial in the variables $x_1, \dotsc, x_n$. (We shall have to say more later about
the toric structure of the image.)

\subsection*{Step 7}

From Step~6, we can now deduce the remaining equations for $\Chb$. Indeed, a point
$(t,v)$ in the image of $g$ has to satisfy the equations
\[
	f_b \Bigl( t_1 e^{-2\pi i \sum_{s \leq k} a_{1,s} w_s}, \dotsc, t_n e^{-2\pi i
		\sum_{s \leq k} a_{n,s} w_s} \Bigr) = 0
\]
for $b = 1, \dotsc, e$. From Step~4 we know, moreover, that $w_s = p_s(v)$; therefore
\begin{equation*} \label{eq:B}
	f_b \Bigl( t_1 e^{-2\pi i \sum_{s \leq k} a_{1,s} p_s(v)}, \dotsc, t_n e^{-2\pi i
		\sum_{s \leq k} a_{n,s} p_s(v)} \Bigr) = 0
		\tag{B}
\end{equation*}
is another set of $e$ holomorphic equations satisfied by the closure $\Chb$.

\subsection*{Step 8}

It remains to see that the $d+e$ equations in \eqref{eq:A} and \eqref{eq:B}
really define $\Chb$, and not a bigger set. The trivial case is when $h$ is not
invariant under any part of the monodromy; here $k = n$, and as pointed out in
Step~1, $C(h)$ is then already a closed set, and there is nothing to show. In the
remaining case, when $k < n$, we are free to place the point $P$ (the center of our
coordinate system $t_1, \dotsc, t_n$) anywhere we like. A moment's thought shows
that it therefore suffices to consider solutions of the equations over $(0, \dotsc,
0) \in \Delta^n$, and to prove that those have to lie in the closure of $C(h)$.

So consider a point $(0,v) \in \Delta^n \times \CC^d$ that satisfies the equations.
On the one hand, the equations in \eqref{eq:A} define the image of a closed
embedding, as explained in Step~4; therefore, $v = e^{-(w_1 M_1 + \dotsb + w_k M_k)}
h$ for a unique point $(w_1, \dotsc, w_k) \in \CC^k$. Letting $w = (w_1, \dotsc, w_k,
0, \dotsc, 0)$, and going back to the original coordinates $z$ in \eqref{eq:zcoord},
we get a point $(z_1, \dotsc, z_n) \in \CC^n$ such that
\[
	v = e^{-(z_1 N_1 + \dotsb + z_n N_n)} h.
\]

On the other hand, the equations in \eqref{eq:B} arose from the 
map defined in Step~6. Now \eqref{eq:algmap} shows that the point $(0, \dotsc, 0)$
can only be in the closure of the image when some linear combination of the
exponent vectors $(a_{1,i}, \dotsc, a_{n,i})$, for $i = k+1, \dotsc, n$, has positive
coordinates. Since these vectors generate the subgroup $S$, we thus get positive
integers $a_1, \dotsc, a_n$ with
\[
	a_1 N_1 h + \dotsb + a_n N_n h = 0
\]
But by the description in Proposition~\ref{prop:closure}, this says exactly that the
point $(0,v)$ belongs to $\Chb$.

In summary, we have established the following two results. First, we can give local
equations for each of the components $\Chb$.

\begin{prop} \label{prop:localsheet}
The closure $\Chb$ of the sheet $C(h)$ in $\Delta^n \times \CC^d$ is an analytic
subset, defined by $d + e$ holomorphic equations in the coordinates $(t_1, \dotsc,
t_n, v_1, \dotsc, v_d)$. These equations are, firstly,
\[
	v = e^{-(p_1(v) M_1 + \dotsb + p_k(v) M_k)} h.
\]
and, secondly,
\[
	f_b \Bigl( t_1 e^{-2\pi i \sum_{s \leq k} a_{1,s} p_s(v)}, \dotsc, t_n e^{-2\pi i
		\sum_{s \leq k} a_{n,s} p_s(v)} \Bigr) = 0
\]
for $b = 1, \dotsc, e$. In particular, as the closure of the complex submanifold
$C(h)$, the subset $\Chb$ is itself a reduced and irreducible analytic space.
\end{prop}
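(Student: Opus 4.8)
The plan is to assemble the eight steps above into the proposition's two assertions: that the closure $\Chb$ coincides with the common zero locus, call it $Z$, of the $d+e$ holomorphic functions appearing in \eqref{eq:A} and \eqref{eq:B}; and that this $Z$, with its reduced analytic structure, is irreducible. The inclusion $\Chb\subseteq Z$ is the easy half and is already implicit in Steps~3--7: any point of $C(h)$ is $g(w)$ for some $w\in V$, so by the closed embedding of Step~4 its $v$-coordinates satisfy $v=e^{-(p_1(v)M_1+\dotsb+p_k(v)M_k)}h$, which is \eqref{eq:A}; and after dividing off the exponential factors $e^{2\pi i\sum_{s\le k}a_{j,s}w_s}$, its $t$-coordinates lie in the image of the monomial map \eqref{eq:algmap} and hence annihilate the binomials $f_b$, which is \eqref{eq:B}. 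As these functions are holomorphic, $Z$ is a closed analytic subset of $\Delta^n\times\CC^d$ that contains $C(h)$, and therefore contains $\Chb$.

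The reverse inclusion $Z\subseteq\Chb$ is the crux, carried out in Step~8, and is where I expect to spend the effort. Since the centre $P$ of the coordinate polydisk may be placed at any point of $\Mb$, it is enough to show that a solution $(0,v)$ lying over $(0,\dotsc,0)\in\Delta^n$ belongs to $\Chb$; and if at such a recentred point $k=n$, then $C(h)$ is already closed, so one may assume $k<n$. Here \eqref{eq:A} together with the closed embedding of Step~4 pins down a unique $(w_1,\dotsc,w_k)$ with $v=e^{-(w_1M_1+\dotsb+w_kM_k)}h$, and passing back to the coordinates $z$ via \eqref{eq:zcoord} rewrites this as $v=e^{-(z_1N_1+\dotsb+z_nN_n)}h$. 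Setting all $t_j=0$ in \eqref{eq:B} gives $f_b(0,\dotsc,0)=0$ for every $b$, i.e.\ the origin of $\CC^n$ lies in the closure of the image of the monomial map \eqref{eq:toric}; the shape \eqref{eq:algmap} of that map forces some integer combination of the exponent vectors $(a_{1,i},\dotsc,a_{n,i})$, $i=k+1,\dotsc,n$, to have strictly positive coordinates. Since those vectors form a basis of the stabilizer $S$, that combination lies in $S$, giving positive integers $a_1,\dotsc,a_n$ with $a_1N_1h+\dotsb+a_nN_nh=0$; and then Proposition~\ref{prop:closure} places $(0,v)$ in $\Chb$. This yields $Z=\Chb$, so $\Chb$ is analytic and cut out by the stated $d+e$ equations. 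The genuinely delicate point is the passage from ``the $f_b$ all vanish at the origin'' to a \emph{strictly positive} integral relation among the vectors $N_jh$, together with the recognition---via Proposition~\ref{prop:closure}---that such a relation is exactly the condition characterizing the extra points of $\Chb$; everything else is bookkeeping with the parametrizations $f$ and $g$.

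For the last assertion, $C(h)$ is a single connected component of the total space of the local system over $\dstn{n}$, hence a connected complex submanifold of $\Delta^n\times\CC^d$, and therefore---being smooth and connected---an irreducible reduced analytic space. Since $C(h)$ is dense in $\Chb$, irreducibility passes to the closure: if $\Chb=Z_1\cup Z_2$ with both $Z_i$ proper closed analytic subsets, then writing $C(h)$ as the union of the closed analytic subsets $Z_i\cap C(h)$ and invoking its irreducibility makes one of them, say $Z_1\cap C(h)$, all of $C(h)$, whence $\Chb\subseteq Z_1$ (as $Z_1$ is closed), contradicting properness. Reducedness of $\Chb$ is then automatic, being part of what it means to regard $\Chb$ as a reduced analytic space.
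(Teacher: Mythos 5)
Your proof is correct and follows essentially the same route as the paper: it packages Steps 4--7 as the inclusion $\Chb\subseteq Z$, Step 8 as the reverse inclusion via the toric fact about when the origin lies in the closure of the monomial image, and then derives irreducibility from the connectedness of the submanifold $C(h)$, exactly as the paper's summary proposition does. The only cosmetic difference is your explicit framing of the closure argument as a two-inclusion equality and your spelled-out density argument for irreducibility, both of which the paper leaves implicit.
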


Moreover, because only finitely many components can meet at any given point (by
Lemma~\ref{lem:fincomps}), we can conclude that the closure of the image of $f$ is an
analytic subset of $\Delta^n \times \CC^d$ as well.

\begin{prop} \label{prop:local}
Let $T_U$ be the total space of the local system over the open set $U = M \cap
\Delta^n$. The closure of $T_U$ inside of $\Delta^n \times \CC^d$ is a reduced analytic
subset with countably many irreducible components, each of the form $\Chb$ for some
$h \in \ZZ^d$.
\end{prop}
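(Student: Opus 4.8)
The plan is to assemble Proposition~\ref{prop:local} from results already established. The first step is to pass from $T_U$ to the individual sheets. As noted in the technical point at the end of Section~\ref{sec:closure}, the subset $T_U \cap (\dstn{n} \times \CC^d) = \bigcup_{h \in H} C(h)$ is dense in $T_U$, so both have the same closure in $\Delta^n \times \CC^d$; and by Proposition~\ref{prop:closure} every point of that closure lies in the closure of a \emph{single} sheet, so $\overline{T_U} = \bigcup_{h \in H} \Chb$. Two sheets $C(h)$ and $C(h')$ coincide exactly when $h$ and $h'$ lie in the same $\ZZ^n$-orbit, and are disjoint otherwise; since $\ZZ^d$ has only countably many $\ZZ^n$-orbits, there are only countably many distinct sheets, and hence only countably many distinct $\Chb$. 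By Proposition~\ref{prop:localsheet}, each $\Chb$ is a reduced, irreducible analytic subset of $\Delta^n \times \CC^d$.

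The main work is to show that the family $\{\Chb\}$ is locally finite, so that the union above is again analytic. Suppose some point $Q$ had no neighborhood meeting only finitely many sheets. Choosing nested neighborhoods of $Q$ and picking from each a point lying in a sheet not yet encountered, we obtain $q_m \to Q$ with $q_m \in C(h_m)$ for pairwise distinct sheets, so that in particular the $h_m$ lie in distinct $\ZZ^n$-orbits and are themselves distinct. Arguing exactly as in Lemma~\ref{lem:fincomps}---moving the center of the coordinate system to the base point of $Q$, and using the density of the total space over $\dstn{n}$ inside the total space over $M \cap \Delta^n$ to replace the $q_m$ by nearby points lying in the image of $f$---we may assume $Q$ lies over $(0, \dotsc, 0)$ and that $q_m = f(z(m), h_m)$ with $\Re z_j(m) \in \unit$. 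Proposition~\ref{prop:6steps} then yields a subsequence along which $h_m$ is constant, contradicting their distinctness. Hence $\{\Chb\}$ is locally finite, and since a locally finite union of (reduced) analytic subsets is again a (reduced) analytic subset, $\overline{T_U} = \bigcup_{h} \Chb$ is a reduced analytic subset of $\Delta^n \times \CC^d$.

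It remains to identify the irreducible components. Each closed sheet satisfies $\Chb \cap (\dstn{n} \times \CC^d) = C(h)$, because $C(h)$ is a connected component of the total space of the local system over $\dstn{n}$ and is therefore closed in $\dstn{n} \times \CC^d$; consequently $C(h)$, which is dense in $\Chb$, is disjoint from every other $\overline{C(h')}$. Using this together with the local finiteness just proved and the irreducibility of each $\Chb$, one sees that no $\Chb$ can be contained in the union of the others---otherwise $\Chb$, being irreducible, would lie in some single $\overline{C(h')}$, and then so would $C(h)$, which is impossible. Hence the distinct $\Chb$ are precisely the irreducible components of $\overline{T_U}$, and there are countably many of them.

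I expect the main obstacle to be the local finiteness step. Lemma~\ref{lem:fincomps} controls only the sheets passing through one fixed point, so some care is needed to see that a sequence $q_m \to Q$---whose base points converge to, but need not equal, that of $Q$---still falls under Proposition~\ref{prop:6steps} after the coordinate change; once this is granted, the remainder is bookkeeping with facts already in place.
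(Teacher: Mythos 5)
Your proof is correct and follows essentially the same route as the paper: reduce to the sheets $C(h)$ over $\dstn{n}$, use Proposition~\ref{prop:closure} to write $\overline{T_U}$ as $\bigcup_h \Chb$, cite Proposition~\ref{prop:localsheet} for each piece, and control the union with the finiteness argument coming from Proposition~\ref{prop:6steps}. The one place you go beyond the paper's wording is a genuine improvement: you correctly note that what is needed is local finiteness of the family $\{\Chb\}$, not merely the pointwise finiteness literally stated in Lemma~\ref{lem:fincomps}, and you observe that Proposition~\ref{prop:6steps} in fact delivers this stronger statement since it applies to any sequence whose base points tend to the origin, not only to sequences lying over a single fiber.
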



\section{Singularities} \label{sec:singularities}

The analytic space $\Tb$, described in Theorem~\ref{thm:Tb}, will in general be
singular at points not in $T$. This is apparent from the discussion in the previous
section---on the one hand, several of the local components $C(h)$ may be coming
together at the boundary (see Lemma~\ref{lem:fincomps}); on the other hand, the local
equations of the closure are such that singularities have to be expected even for
each $C(h)$ itself (see Steps~6 and 7 in the previous section). There are two
possible approaches to this problem---normalization, and resolution of singularities.

\subsection*{Normalization}

For various applications, it is desirable to have at least a normal space; mostly
because it is then possible to extend holomorphic maps that are naturally defined on
$T$ to all of $\Tb$, by showing that they extend in codimension one. In addition, the
normalization of $\Tb$ is an unexpectedly nice space.

According to the local description of $\Tb$ given above, the process of normalizing
$\Tb$ has two effects. Firstly, it separates all the local components $\Chb$ at points
where they meet, making them disjoint. Secondly, it normalizes each $\Chb$ itself.
From Step~6 in the previous section, we see that $\Chb$ is locally isomorphic to a
(typically non-normal) toric variety. Indeed, the map $g$ in \eqref{eq:newparam},
whose image is the sheet $C(h)$, is locally the product of a closed immersion and a
map defined by monomials. As explained in the article by David Cox
\cite{Cox}*{p.~402}, the closure of the image of a monomial map as in
\eqref{eq:toric} is a non-normal toric variety, and after taking the normalization,
one gets a toric variety in the usual sense.  It follows that the normalization of
each $\Chb$ is locally toric; in particular, this means that it has only rational
singularities. The same is therefore true for the normalization of $\Tb$ itself.

\subsection*{Resolution of singularities}

A second possibility is to resolve all the singularities of $\Tb$. By construction,
the total space $T$ of the local system is a nonsingular dense open subset of $\Tb$.
Its complement $\Tb \setminus T$, being the preimage of the divisor $\Mb \setminus M$
under the holomorphic projection map from $\Tb$ to $\Mb$, is a closed analytic
subspace. According to the results of Bierstone and Milman \cite{BM}*{p.~298}, it is
possible to resolve the singularities of $\Tb$ by blowing up, at the same time making
the preimage of $\Tb \setminus T$ into a divisor with only normal crossing
singularities. Since the centers of the blowups can be chosen to lie outside of $T$,
the resulting complex manifold will still have $T$ as a dense open subset. Of course,
the space one gets is as ``canonical'' as the resolution process. 

Since the normalization of $\Tb$ is locally toric, one can also normalize first, and
then use the older results on desingularizing toroidal embeddings
\cite{KKMSD}*{p.~94} to create a non-singular space from $\Tb$.

\section{A universal property of the extension} \label{sec:universal}

In this section, we shall give some justification for calling the space $\Tb$ the
``canonical extension'' of the local system $\locsysH$. The following proposition is
the main result in this direction.

\begin{prop} \label{prop:universal}
Let $g \colon \Xb \to \Mb$ be an arbitrary holomorphic map from a reduced and \emph{normal}
analytic space $\Xb$ to $\Mb$. Assume that the open set $X = \finv[g](M)$ is dense in
$\Xb$, and that there is a factorization
\[
\begindc{\commdiag}[25]
\obj(1,1)[a]{$X$}
\obj(3,1)[b]{$M$}
\obj(3,3)[c]{$T$}
\mor{a}{b}{$g$}
\mor{a}{c}{$s$}[\atleft,\dasharrow]
\mor{c}{b}{$p$}
\enddc
\]
as in the diagram. Then $s$ extends uniquely to a holomorphic map $s \colon \Xb \to
\Tb$, making 
\[
\begindc{\commdiag}[25]
\obj(1,1)[a]{$\Xb$}
\obj(3,1)[b]{$\Mb$}
\obj(3,3)[c]{$\Tb$}
\mor{a}{b}{$g$}
\mor{a}{c}{$s$}[\atleft,\dasharrow]
\mor{c}{b}{$p$}
\enddc
\]
commute.
\end{prop}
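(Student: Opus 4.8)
The plan is to reduce the statement to a local computation over the coordinate neighborhoods $\Delta^n \subseteq \Mb$ introduced in Section~\ref{sec:local}, and there to use the explicit parametrization of the sheets $C(h)$ together with the universal property of the covering space $\HH^n \to \dstn{n}$. Since $X$ is dense in $\Xb$ and $\Tb$ is (locally) closed in $T(\shVb)$, uniqueness is immediate: two holomorphic extensions agreeing on the dense set $X$ must agree everywhere. So the whole content is existence, and for that it is enough to produce, for each point $\bar x \in \Xb \setminus X$, a holomorphic map on a neighborhood of $\bar x$ extending $s$; these will automatically be compatible on overlaps, again by density.

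First I would fix $\bar x \in \Xb \setminus X$ and a connected neighborhood $\Omega$ of $\bar x$ in $\Xb$, chosen small enough that $g(\Omega)$ lies in one coordinate polydisk $\Delta^n$ with $\Mb \setminus M$ cut out by $t_1 \dotsm t_r = 0$. Because $\Xb$ is normal and $X \cap \Omega$ is dense and connected (shrinking $\Omega$ if necessary), the restriction $s\colon X \cap \Omega \to T$ lands in a single connected component of the total space over $g(X\cap\Omega) \subseteq \dstn n$; composing with the embedding into $\Delta^n\times\CC^d$, this component is one of the sheets $C(h)$ from Section~\ref{sec:localequations}. Now $g|_{X\cap\Omega}$ is a holomorphic map into $\dstn n$, so its restriction to the complement of the (nowhere dense, since $\Xb$ is normal hence locally irreducible) analytic set $X \setminus g^{-1}(\dstn n)$ lifts along the universal cover $\HH^n \to \dstn n$ only after passing to a cover of $X\cap\Omega$; rather than fight with that, I would work directly with the map $g\colon X\cap\Omega \to \dstn n$ and the section $s$, and observe that $s$ composed with the $v$-coordinate projection is a holomorphic map $X\cap\Omega \to \CC^d$ landing in the image of the closed embedding $\CC^k \to \CC^d$, $(w_1,\dotsc,w_k)\mapsto e^{-(w_1M_1+\dotsb+w_kM_k)}h$, of Step~4. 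Applying the polynomials $p_1,\dotsc,p_k$ of Step~4 to that map yields holomorphic functions $w_j = p_j(v\circ s)$ on $X\cap\Omega$; since $\Xb$ is normal and these are bounded near $\bar x$ (the target of the closed embedding is closed, and convergence of $v\circ s$ along any sequence approaching $\bar x$ forces convergence of the $w_j$ by continuity of the $p_j$), by the Riemann extension theorem they extend holomorphically across $\bar x$. This is exactly the data needed: the desired extension $\tilde s\colon \Omega \to \Delta^n\times\CC^d$ is $\tilde s = (g, \;e^{-(p_1(v\circ s)M_1 + \dotsb + p_k(v\circ s)M_k)}h)$, using the extended $p_j(v\circ s)$, and I must check that its image lies in $\Chb$.

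The verification that $\tilde s(\Omega) \subseteq \Chb$ is where the local equations of Proposition~\ref{prop:localsheet} do the work: equations~\eqref{eq:A} hold by construction of $\tilde s$, and equations~\eqref{eq:B} hold on the dense subset $X\cap\Omega$ (where $\tilde s$ agrees with the original $s$, which factors through $C(h)$) and therefore on all of $\Omega$ by continuity. The one genuine point to nail down is that $g$ itself is compatible, i.e. that $p\circ\tilde s = g$; this is built into the first $n$ coordinates of $\tilde s$ being $g$, and the factorization $p\circ s = g$ over $X$ pins down that we have chosen the correct sheet $C(h)$ and the correct branch of the $w_j$. Gluing the local extensions $\tilde s$ over a cover of $\Xb$ by such $\Omega$'s gives the global $s\colon\Xb\to\Tb$, and $p\circ s = g$ holds globally by density. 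The main obstacle is the lifting issue in the second paragraph: one must be careful that the section $s$ over $X\cap\Omega$ does select a single sheet $C(h)$ and that the ambiguity in $h$ (recall $C(h)=C(h')$ when $h,h'$ lie in the same $\ZZ^n$-orbit) does not obstruct the extension — but since all such $h$ give the same sheet and the same closure $\Chb$, and the $p_j$ depend only on the sheet, this ambiguity is harmless. Normality of $\Xb$ is used twice and essentially: once to guarantee local irreducibility (so that $X\cap\Omega$ is connected and picks out one sheet) and once for the Riemann extension of the $p_j(v\circ s)$ across the analytic set $\Xb\setminus X$.
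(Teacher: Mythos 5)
Your overall architecture is sound, and the reduction to a local extension problem via normality (local irreducibility of $\Xb$, uniqueness by density, gluing by density) is fine. The genuine gap is in the sentence where you assert that the functions $p_j(v\circ s)$ are \emph{bounded} near $\bar x$. The parenthetical you offer is not an argument: you observe that the image of $(w_1,\dotsc,w_k)\mapsto e^{-(w_1M_1+\dotsb+w_kM_k)}h$ is closed and that \emph{if} $v\circ s$ converges along a sequence then so do the $w_j=p_j(v\circ s)$. But a closed subset of $\CC^d$ need not be bounded, and you never establish that $v\circ s$ stays bounded as $x\to\bar x$. That boundedness \emph{is} the entire analytic content of the proposition: if it failed, $s$ could not extend into $T(\shVb)$ at all, let alone into $\Tb$. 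So as written the proof is circular at exactly the crucial point. (Boundedness is in fact true, but proving it requires using that $s$ is a \emph{section}, $p\circ s=g$, in a quantitative way: along each branch $\{z_a=0\}$ of $\Xb\setminus X$, the monodromy class $\gamma_a=(\mathrm{ord}_{z_a}g^*t_1,\dotsc,\mathrm{ord}_{z_a}g^*t_n)\in\ZZ^n$ lies in the stabilizer $S$ of $h$, and since the $w_j$ are the $S$-orthogonal coordinates, the logarithmic divergences in $w_j(x)=\sum_i (A^{-1})_{j,i}\log g_i(x)/2\pi i$ cancel. You invoke none of this.)

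The paper avoids this entirely by a different and cleaner route. It first reformulates the problem as extending a holomorphic \emph{section} of the pulled-back bundle $\fust[g]\shVb$ from $X$ to $\Xb$ (a map into $\Tb$ with $p\circ s=g$ is the same thing as a section of $\fust[g]\shVb$ landing in $T$ over $X$). Because $\Xb$ is normal, it then suffices to extend that section in codimension one, i.e.\ across the smooth codimension-one locus of $\Xb\setminus X$; the remaining higher-codimension locus is handled for free by the Riemann/Hartogs extension theorem on normal spaces (Narasimhan). At a smooth codimension-one boundary point, the problem reduces, by slicing with transverse disks $\Delta\to\Xb$, to the one-variable lemma: for $g\colon\Delta\to\Mb$ with $g^{-1}(M)=\dst$, the section corresponds to a class $h$ with $Nh=0$ for the (unipotent) pullback monodromy $N$, so in the canonical-extension frame the section is the constant $h$ and extends trivially. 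No boundedness estimate is ever needed, because the one-variable case makes it tautological. So while your strategy could in principle be repaired (either by doing the cancellation computation above at smooth codimension-one points and then invoking normality exactly as the paper does, or by proving boundedness directly), the proof as submitted has a hole at its load-bearing step and is not a correct alternative to the paper's argument.
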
 

In order to prove this, we shall first reformulate the statement. Let $\shVb$ be the
canonical extension of the flat vector bundle $\shV = \shO_M \tensor \locsysH$ to
$\Mb$; then $\Tb$ is the closure of $T$ inside the total space of $\shVb$. The map $s
\colon X \to T$ gives a section of the pullback of the local system $\finv[g]
\locsysH$---as well as of the bundle $\fust[g] \shVb$---over $X$, that we continue to
denote by $s$.  Now the statement of the proposition is equivalent to saying that $s$
extends to a section of $\fust[g] \shVb$ over $\Xb$. Indeed, such an extension
(clearly unique if it exists) gives a map from $\Xb$ to the total space of $\shVb$,
and since $X$ is mapped into $T$, the image has to be contained in the closure $\Tb$.
Thus extending the map $s$ to $\Xb$ is equivalent to extending the corresponding
section $s$ of $\fust[g] \shVb$.

We now begin the proof by establishing the following special case.

\begin{lem} 
The conclusion of Proposition~\ref{prop:universal} holds whenever $\Xb = \Delta$ is
the unit disk, and $X = \dst$.
\end{lem}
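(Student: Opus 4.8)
The plan is to localise the question near $P=g(0)$ and carry out the computation in Deligne's frame, where it reduces to the observation that the only non-holomorphic part of the flat section $s$ is annihilated by the monodromy relation satisfied by the chosen lattice vector. Concretely, I would first pass to the reformulation stated just above the lemma: it is enough to extend the section $s$ of $g^{*}\shV$ over $\dst$ to a holomorphic section of $g^{*}\shVb$ over $\Delta$. Since $g(0)\notin M$, the point $P=g(0)$ lies on $\Mb\setminus M$, so I would choose a polydisc neighbourhood $\Delta^{n}\subseteq\Mb$ \emph{centred} at $P$, in which $\Mb\setminus M$ is $\{t_{1}\dotsm t_{r}=0\}$ with $r\geq 1$, and shrink $\Delta$ so that $g(\Delta)\subseteq\Delta^{n}$; then $g(0)=0$ and $g(\dst)\subseteq U=\Delta^{n}\cap M$. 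Working in the frame $s_{1},\dotsc,s_{d}$ of $\shVb|_{\Delta^{n}}$ from Section~\ref{sec:local}, the bundle $g^{*}\shVb$ is trivialised over $\Delta$ and $s$ becomes a $\CC^{d}$-valued holomorphic function on $\dst$, which I must show extends holomorphically across $0$. (Because $\locsysH$ is genuinely defined on $U$, a small loop around $\{t_{j}=0\}$ with $j>r$ bounds a disc inside $U$, so $N_{j}=0$ for such $j$, and only the directions $j\leq r$ will matter.)

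Next I would make $g$ and $s$ explicit. For $1\leq j\leq r$ the function $g_{j}$ is holomorphic on $\Delta$, vanishes at $0$ (because $P$ lies on all $r$ branches of the divisor), and is nowhere zero on $\dst$ (because $g(\dst)\subseteq M$); hence $g_{j}(t)=t^{k_{j}}u_{j}(t)$ with $k_{j}\geq 1$ and $u_{j}$ a unit near $0$. Lifting $g|_{\dst}$ along the universal cover $\HH\to\dst$, $w\mapsto t=e^{2\pi i w}$, one may take the $j$-th coordinate of the lift (for $j\leq r$) to be $\tilde g_{j}(w)=k_{j}w+v_{j}(t)$, where $v_{j}(t)=\tfrac{1}{2\pi i}\log u_{j}(t)$ is holomorphic near $0$. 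The given section $s$, being a single-valued flat section of $g^{*}\shV$ over $\dst$, picks out a vector $h$ in the fibre of $\locsysH$ that is fixed by the monodromy $T_{1}^{k_{1}}\dotsm T_{r}^{k_{r}}=\exp\!\bigl(-\textstyle\sum_{j\leq r}k_{j}N_{j}\bigr)$, equivalently $\bigl(\sum_{j\leq r}k_{j}N_{j}\bigr)h=0$; and in the frame $s_{1},\dotsc,s_{d}$ the coordinates of $s$ at $t=e^{2\pi i w}$ are $e^{-\sum_{j\leq r}\tilde g_{j}(w)N_{j}}h$.

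Finally comes the point of the proof. Since the $N_{j}$ commute,
\[
 e^{-\sum_{j\leq r}\tilde g_{j}(w)N_{j}}h
 \;=\; e^{-w\sum_{j\leq r}k_{j}N_{j}}\,e^{-\sum_{j\leq r}v_{j}(t)N_{j}}h
 \;=\; e^{-\sum_{j\leq r}v_{j}(t)N_{j}}h,
\]
where the last equality holds because $e^{-\sum_{j\leq r}v_{j}(t)N_{j}}h$ is again killed by $\sum_{j\leq r}k_{j}N_{j}$, so $e^{-w\sum_{j\leq r}k_{j}N_{j}}$ acts on it as the identity. The right-hand side is holomorphic in $t$ near $0$, so $s$ extends holomorphically over $\Delta$, uniquely since $\dst$ is dense; its value at $0$ is $e^{-\sum_{j\leq r}v_{j}(0)N_{j}}h$, which is indeed of the form described in Proposition~\ref{prop:closure} (with $a_{j}=k_{j}$). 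This extended section of $g^{*}\shVb$ is a holomorphic map $\Delta\to T(\shVb)$ over $g$ whose restriction to $\dst$ lands in $T$, hence its image lies in the closure $\Tb$; that is the required extension. The only real subtleties are bookkeeping ones: checking that centring the coordinates at $P$ forces $k_{j}\geq 1$ for every $j\leq r$, so that these truly are the monodromy exponents of $g^{*}\locsysH$; and the elementary identity $e^{-wN}h=h$ whenever $Nh=0$, which is precisely what removes the only $w$-dependent---hence potentially non-holomorphic---factor from $s$.
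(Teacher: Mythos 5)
Your proof is correct and establishes the lemma, but it takes a noticeably different route from the paper. The paper's proof is short because it invokes the standard compatibility of Deligne's canonical extension with pullback: since $\finv[g]\locsysH$ has unipotent monodromy around $0\in\Delta$, the bundle $\fust[g]\shVb$ \emph{is} the canonical extension of $\shO_{\dst}\tensor\finv[g]\locsysH$; in the intrinsic Deligne frame on $\Delta$ the flat section $s$ corresponds to an $h$ with $Nh=0$ (where $N$ is the log of the pulled-back monodromy), so $s$ is literally the constant sheet $\Delta\times\{h\}$ and extends trivially. You instead work entirely in the pulled-back frame $\fust[g]s_1,\dotsc,\fust[g]s_d$ coming from the ambient polydisc and carry out the computation from scratch: writing $g_j(t)=t^{k_j}u_j(t)$, lifting $g$ to $\HH$, and splitting off the unit factors $v_j(t)=\tfrac{1}{2\pi i}\log u_j(t)$, you find $s=e^{-\sum_{j\le r}v_j(t)N_j}h$, visibly holomorphic at $t=0$ once $\bigl(\sum k_jN_j\bigr)h=0$. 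This is in effect a hands-on reproof of the pullback compatibility in the case needed---the factor $e^{-\sum v_j(t)N_j}$ you produce is exactly the holomorphic gauge transformation relating the two frames. Your version is more self-contained (it does not presuppose the functoriality of the canonical extension); the paper's is faster because it treats that functoriality as known. The bookkeeping points you flag ($k_j\ge 1$ for $j\le r$ because the coordinates are centred at $P$, and $N_j=0$ for $j>r$) are correctly handled, and the identity $e^{-wN}h=h$ when $Nh=0$ via commutativity of the $N_j$ is exactly the crux in both arguments.
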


\begin{proof}
Let $g \colon \Delta \to \Mb$ be the given morphism. Since the original local system
$\locsysH$ is unipotent along $\Mb \setminus M$, its pullback $\finv[g] \locsysH$ to
$\dst$ has unipotent monodromy around $0 \in \Delta$. Thus the vector bundle
$\fust[g] \shVb$ is the canonical extension of $\shO_{\dst} \tensor \finv[g]
\locsysH$ to $\Delta$. 

As we said, it suffices to show that the section $s \in H^0(\dst, \fust[g] \shVb)$
extends to $\Delta$; but this follows very easily from the construction of the
canonical extension. Indeed, if we let $N$ be the logarithm of the monodromy of
$\finv[g] \locsysH$ on $\dst$, then the description on p.~\pageref{sec:local} shows
that the total space of $\finv[g] \locsysH$, inside that of the bundle $\fust[g]
\shVb$, is given by the image of the map
\[
	\HH \times \ZZ^d \to \dst \times \CC^d, \qquad
	(z, h) \mapsto \bigl( e^{2\pi i z}, e^{-zN}h \bigr),
\]
in a suitable frame of $\fust[g] \shVb$. The section $s$ corresponds to a
monodromy-invariant element $h \in \ZZ^d$, satisfying $N h = 0$, and so the whole
sheet $\Delta \times \{h\}$ lies in the closure of the image. This shows that $s$
extends over $0$, proving the lemma.
\end{proof}

We can now turn to proving Proposition~\ref{prop:universal} in general. Let $g \colon
\Xb \to \Mb$ be the given map, and set $Z = \Xb \setminus X$. As before, $s \in
H^0(X, \fust[g] \shVb)$ denotes the section of the pullback bundle corresponding to
the given factorization. In order to show that $s$ extends holomorphically to all of
$\Xb$, it suffices to show that it extends in codimension one, since $\Xb$ is normal
(see \cite{Narasimhan}*{p.~118}, for example). The singular locus of $\Xb$ has
codimension at least two; it is therefore enough to prove that $s$ extends across
those points $P \in Z$ where both $Z$ and $\Xb$ are nonsingular, and
$\mathop{codim}_P(Z,X) = 1$.

This is a local question, and after choosing suitable local coordinates $z_1, \dotsc,
z_m$ on a small neighborhood $U$ of $P$ in $\Xb$, we can assume that $Z \cap U$ is
defined by the equation $z_m = 0$. Applying the lemma to maps of the form \[ \Delta
\to U, \qquad t \mapsto (z_1, \dotsc, z_{m-1}, t), \] we see that the section $s$
extends across $P$. This completes the proof of the proposition.

\subsection*{Note}

Since $\Tb$ itself is usually not a normal space, Proposition~\ref{prop:universal} is
not quite as strong as one might like it to be. It is, however, easy to make examples
of maps from non-normal spaces $\Xb$ to $\Mb$ (for instance, taking $\Xb$ to be a
nodal curve) where the statement is false. This suggests that the normalization of
$\Tb$ is the space that deserves to be called the ``canonical extension'' of the
local system $\locsysH$.


\appendix
\section*{Conventions} \label{sec:conventions}

This short section lists various conventions that are used throughout the paper. The
construction of Deligne's canonical extension is also reviewed, in a form suitable
for our proof.

\subsection*{Fundamental group}

If $X$ is a topological space, with basepoint $x \in X$, we write $\pi_1(X, x)$ for
its fundamental group. Given two closed paths $\gamma, \delta \colon \unitint \to X$,
with $\gamma(0) = \gamma(1) = \delta(0) = \delta(1) = x$, representing two elements
of $\pi_1(X, x)$, their product is defined as
\[
	\gamma \delta \colon \unitint \to X, \qquad 
	t \mapsto 
		\begin{cases}
			\; \gamma(2t) \quad &\text{if $0 \leq t \leq 1/2$,} \\
			\; \delta(2t-1) \quad &\text{if $1/2 \leq t \leq 1$.}
	\end{cases}
\]

Writing $\Delta$ for the unit disk in $\CC$, and $\dst = \Delta \setminus \{0\}$ for
the punctured disk, we shall take the generator of $\pi_1(\dst) \simeq \ZZ$ to be a
loop that goes around the origin once, counter-clockwise.

\subsection*{Action on the fiber} 

For any covering space $p \colon Y \to X$ of $X$, the group $\pi_1(X, x)$ acts on the
fiber $\finv[p](x)$ by a left action; given $\gamma \in \pi_1(X, x)$ and a point $y
\in Y$ with $p(y) = x$, one has $\gamma \cdot y = \tilde{\gamma}(0)$, where $\gamma$
has been lifted to a path $\tilde{\gamma} \colon \unitint \to Y$ with
$\tilde{\gamma}(1) = y$. Put more succinctly, $\gamma$ acts by parallel translation along
the path $\gamma^{-1}$.

\subsection*{Local systems}

A special case of this is the correspondence between local systems on $X$ and
representations of the fundamental group. Given a local system $\locsysH$ of abelian
groups on $X$, say with fiber $H = \locsysH_{x}$, each connected component of the
total space of $\locsysH$ is a covering space of $X$. One obtains a representation 
\[
	\rho \colon \pi_1(X, x) \to \Aut(H)
\]
by letting the fundamental group act on the fiber. From the representation $\rho$, on
the other hand, one can recover $\locsysH$. Indeed, if $p \colon \tilde{X} \to X$ is the
universal covering space of $X$ (assuming its existence), the quotient of $\tilde{X}
\times H$ by the group action
\[
	\gamma \cdot \bigl( \tilde{x}, h \bigr) = 
		\bigl( \gamma \cdot \tilde{x}, \rho(\gamma) h \bigr)
\]
is isomorphic to the total space of $\locsysH$. From this description, it follows
that the space of sections of $\locsysH$ over an open set $U \subseteq X$ is given by
\[
	\menge{\tilde{s} \colon \finv[p](U) \to H}{\text{$\tilde{s}(\gamma \cdot y) = 
			\rho(\gamma) \tilde{s}(y)$ for all $\gamma \in \pi_1(X, x)$, 
		$y \in \finv[p](U)$}}.
\]

\subsection*{Deligne's canonical extension}

Let $(\shV, \nabla)$ be a flat holomorphic vector bundle on a complex manifold $M$.
We assume that $M$ is an open subset of a bigger complex manifold $\Mb$, in such a
way that
\begin{enumerate}
\item $\Mb \setminus M$ has normal crossing singularities, and
\item $\nabla$ is unipotent along $\Mb \setminus M$.
\end{enumerate}
The second condition means that, near points of $\Mb \setminus M$, the local
monodromy for the local system of $\nabla$-flat sections should be unipotent.  Deligne proves (see \cite{Deligne}*{pp.~91--5} for
the precise statement) that
$(\shV, \nabla)$ admits a unique extension to a vector bundle $\shVb$ on $\Mb$, whose
defining property is that, in any local frame for $\shVb$, the connection $\nabla$
has only logarithmic poles along $\Mb \setminus M$ with nilpotent residues.

For the purposes of this paper, we need a description of $\shVb$ in local
coordinates, on a polydisk $\Delta^n$. Thus let $t_1, \dotsc, t_n$ be local
holomorphic coordinates near a point of $\Mb$, and assume that $\Mb \setminus M$ is
defined by the equation $t_1 \dotsm t_r = 0$. Restricting further, if necessary, it
suffices to treat the case when $M = \dstn{n}$.

Let $d$ be the rank of the bundle $\shV$, and $V$ its fiber at some basepoint in
$\dstn{n}$.  The fundamental group $\ZZ^n$ of $\dstn{n}$ acts on $V$, by parallel
translation, and we let $T_j$ be the operator corresponding to the $j$-th standard
generator of $\ZZ^n$. By assumption, each $T_j$ is a \emph{unipotent} operator, and
we can therefore define the nilpotent operators 
\[
	N_j = -\log T_j = \sum_{n = 1}^{\infty} \frac{1}{n} (\id - T_j)^n
\]
as their logarithms.\footnote{The minus sign is there to stay with the conventions
of other authors, for example \cite{CDK}.}

The vector bundle $\shVb$ has distinguished trivializations of the form
\begin{equation} \label{eq:trivialization}
	\shVb \simeq \shO_{\Delta^n} s_1 \oplus \dotsb \oplus \shO_{\Delta^n} s_d,
\end{equation}
for certain special sections $s_1, \dotsc, s_d$ of $\shV$ over $\dstn{n}$.
To obtain the sections in question, pull $(\shV, \nabla)$ back to the universal
covering space
\[
	p \colon \HH^n \to \dstn{n}, \qquad p(z_1, \dotsc, z_n) = 
		\bigl( e^{2\pi i z_1}, \dotsc, e^{2\pi i z_n} \bigr),
\]
where it becomes trivial (by virtue of being flat).
By our conventions, the fundamental group $\ZZ^n$ acts on $\HH^n$ by the rule
\[
	(a_1, \dotsc, a_n) \cdot (z_1, \dotsc, z_n) = (z_1 - a_1, \dotsc, z_n - a_n),
\]
and so sections of $\shV$ over $\dstn{n}$ correspond to holomorphic maps
$\tilde{s} \colon \HH^n \to V$ with the property that
\[
	\tilde{s}(z - e_j) = T_j \tilde{s}(z)
\]
for all $z \in \HH^n$ and all $j = 1, \dotsc, n$.

Now let $v_1, \dotsc, v_d \in V$ be an arbitrary basis for $V$. The maps
\[
	\tilde{s}_i \colon \HH^n \to V, \qquad \tilde{s}_i(z) = e^{\sum z_j N_j} v_i,
\]
have the required invariance property, because
\[
	\tilde{s}_i(z - e_j) = e^{-N_j} \tilde{s}_i(z) = T_j \tilde{s}_i(z),
\]
and thus define a frame of sections $s_1, \dotsc, s_d$ for $\shV$ on $\dstn{n}$.
These sections give the special trivialization of $\shVb$ in \eqref{eq:trivialization}.

\begin{bibsection}
\begin{biblist}
\bib{Deligne}{book}{
   author={Deligne, Pierre},
   title={Equations diff\'erentielles \`a points singuliers r\'eguliers},
   series={Lecture Notes in Mathematics, Vol.~163},
   publisher={Springer-Verlag},
   place={Berlin},
   date={1970},
}
\bib{CDK}{article}{
   author={Cattani, Eduardo},
   author={Deligne, Pierre},
   author={Kaplan, Aroldo},
   title={On the locus of Hodge classes},
   journal={Journal of the American Mathematical Society},
   volume={8},
   date={1995},
   number={2},
   pages={483--506},
}
\bib{Cox}{article}{
   author={Cox, David A.},
   title={Recent developments in toric geometry},
   conference={
      title={Algebraic geometry---Santa Cruz 1995},
   },
   book={
      series={Proceedings of Symposia in Pure Mathematics},
      volume={62},
   },
   date={1997},
   pages={389--436},
}
\bib{BM}{article}{
   author={Bierstone, Edward},
   author={Milman, Pierre D.},
   title={Canonical desingularization in characteristic zero by blowing up
   the maximum strata of a local invariant},
   journal={Inventiones Mathematicae},
   volume={128},
   date={1997},
   number={2},
   pages={207--302},
}
\bib{KKMSD}{book}{
   author={Kempf, George},
   author={Knudsen, Finn Faye},
   author={Mumford, David},
   author={Saint-Donat, Bernard},
   title={Toroidal embeddings. I},
   note={Lecture Notes in Mathematics, Vol.~339},
   publisher={Springer-Verlag},
   place={Berlin},
   date={1973},
}
\bib{Narasimhan}{book}{
   author={Narasimhan, Raghavan},
   title={Introduction to the theory of analytic spaces},
   series={Lecture Notes in Mathematics, Vol.~25},
   publisher={Springer-Verlag},
   place={Berlin},
   date={1966},
}
\end{biblist}
\end{bibsection}

\end{document}